\newtheorem{theorem}{Theorem}[section]
\theoremstyle{definition}
\newtheorem{definition}[theorem]{Definition}
\numberwithin{equation}{section}
\newcommand\N {{\mathbb N}}
\newcommand\R {{\mathbb R}} 
\newcommand\C {{\mathbb C}} 
\newcommand\PP {{\mathcal P}}
\newcommand\astr {{}^{\ast}\hspace{-0.6pt}{\mathbb R}}
\newcommand\astc{{{}^\ast\C}} 
\newcommand\asr {{}^{\ast}\hspace{-1pt}r}
\newcommand\astq {{}^{\ast}\hspace{-0.3pt}{\mathbb Q}}
\newcommand\asts {{}^{\ast}\hspace{-1.85pt}S}
\newcommand\astn {{}^{\ast}\hspace{-0.8pt}{\mathbb N}}
\newcommand\astv {{}^{\ast}\hspace{-1pt}V}
\newcommand\astA{{}^{\ast}\hspace{-3.6pt}A}
\newcommand\astb{{}^{\ast}\hspace{-2.4pt}B}
\newcommand\astx{{}^{\ast}\hspace{-2.4pt}X}
\newcommand\astf {{}^{\ast}\hspace{-2.6pt}f}
\newcommand\astin {{}^{\ast}\hspace{-5pt}\in}
\newcommand\Q {{\mathbb Q}}
\newcommand\st{\textbf{st}} 
\newcommand\sh{\mathbf{sh}}
\newcommand\inte{{\textbf{int}}}
\author[P.F.]{Peter Fletcher}\address{P. Fletcher, School of Computing
and Mathematics, Keele University, Keele, Staffordshire, ST5 2BG, UK}
\email{p.fletcher@keele.ac.uk}
\author[K. H.]{Karel Hrbacek} \address{K. Hrbacek, 305 West 98 Street,
Apt 4AS, New York, NY 10025, U.S.A.} \email{khrbacek@icloud.com}
\author[V. K.]{Vladimir Kanovei} \address{V. Kanovei, Laboratory 6,
IPPI RAN, Bolshoy Karetny per.\;19, build.\;1, Moscow 127051, Russia;
and MIIT, 9b9 Obrazcova St, 127994 Moscow, Russia}
\email{kanovei@googlemail.com}
\author[M. K.]{Mikhail G. Katz}\address{M. Katz, Department of
Mathematics, Bar Ilan University, Ramat Gan 52900 Israel}
\email{katzmik@macs.biu.ac.il}
\author[C. L.]{Claude Lobry}\address{C. Lobry, Centre de Recherche en
Histoire des Id\'ees (CRHI), Universit\'e de Nice Sophia Antipolis,
98, Bd \'Edouard Herriot, BP 3209 - 06204 Nice Cedex}
\email{claude.lobry@icloud.com}
\author[S. S.]{Sam Sanders}\address{S. Sanders,
Ludwig-Maximilians-Universit\"at M\"unchen, Fakult\"at f\"ur
Philosophie, Wissenschaftstheorie und Religionswissenschaft, Munich
Center for Mathematical Philosophy, Geschwister-Scholl-Platz 1,
D-80539 M\"unchen, Germany} \email{sasander@me.com}
\begin{document}

\thispagestyle{empty}


\title[Approaches to analysis with infinitesimals]{Approaches to
analysis with infinitesimals following Robinson, Nelson, and others}

\begin{abstract}
This is a survey of several approaches to the framework for working
with infinitesimals and infinite numbers, originally developed by
Abraham Robinson in the 1960s, and their constructive engagement with
the Cantor--Dedekind postulate and the \emph{Intended Interpretation}
hypothesis.  We highlight some applications including (1)\;Loeb's
approach to the Lebesgue measure, (2)\;a radically elementary approach
to the vibrating string, (3)\;true infinitesimal differential
geometry.  We explore the relation of Robinson's and related
frameworks to the multiverse view as developed by Hamkins.

Keywords: axiomatisations, infinitesimal, nonstandard analysis,
ultraproducts, superstructure, set-theoretic foundations, multiverse,
naive integers, intuitionism, soritical properties, ideal elements,
protozoa.
\end{abstract}

\maketitle \tableofcontents

\section{Of sets and automobiles: a parable}
\label{wings}

The last third of the 19th century saw (at least) two dynamic
innovations: \emph{set theory} and \emph{the automobile}.  Consider
the following parable.

A silvery family sedan is speeding down the highway.  It enters heavy
traffic.  Every \emph{epsilon} of the road requires a new \emph{delta}
of patience on the part of the passengers.  The driver's inquisitive
daughter Sarah,%
\footnote{For the literary career of this character see
Kanovei--Katz--Schaps \cite{15a}.}
sitting in the front passenger seat, discovers a mysterious switch in
the glove compartment.  With a click, the sedan spreads wings and
lifts off above the highway congestion in an \emph{infinitesimal}
instant.  Soon it is a mere silvery speck in an \emph{infinite}
expanse of the sky.  A short while later it lands safely on the front
lawn of the family's home.

Sarah's cousin Georg%
\footnote{This character similarly made a (cameo) appearance in the
final section of Kanovei--Katz--Schaps \cite[pp.\;17--18]{15a}.}
refuses to believe the story: true Sarah's father is an NSA man, but
everybody knows that Karl Benz's 1886 \emph{Patent-Motorwagen} had no
wing option!  

At a less parabolic level, some mathematicians feel that, on the one
hand, ``It is quite easy to make mistakes with infinitesimals, etc.''
(Quinn \cite[p.\;31]{Qu12}) while, as if by contrast, ``Modern
definitions are completely selfcontained, and the only properties that
can be ascribed to an object are those that can be rigorously deduced
from the definition\ldots{} Definitions that are modern in this sense
were developed in the late 1800s.''  (ibid., p.\;32).

We will have opportunity to return to Sarah, cousin Georg,
\emph{switches}, and the heroic ``late 1800s'' in the sequel; see in
particular Section~\ref{s63}.

\section{Introduction}
\label{s1}

The framework created by Abraham Robinson in the 1960s and called by him 
``nonstandard analysis'' is an active
research area featuring many applications in other fields,%
\footnote{\label{f3}See Section\;\ref{s7b} and Stroyan--Luxemburg
\cite{SL}, Henle--Kleinberg \cite{HK}, Arkeryd\;\cite{Ar81},
Lutz--Goze \cite{LG}, Keisler \cite{Ke84}, Albeverio et
al.\;\cite{Al86}, Wallet \cite{Wa86}, van den Berg\;\cite{Va87},
Callot \cite{Ca93}, Rubio \cite{Ru94}, Diener--Diener \cite{DD},
Henson \cite{He97}, Ross \cite{Ro97}, Jin \cite{Ji02},
Kanovei--Shelah\;\cite{KS}, Kanovei--Lyubetskii \cite{KL}, Goldbring
\cite{Go10}, van den Dries--Goldbring\;\cite{VG}, Nowik--Katz
\cite{15d}, Pra\v z\'ak--Slav\'\i k \cite{PS}; this list of
applications is by no means comprehensive.}
its own journal (\emph{Journal of Logic and Analysis}),%
\footnote{See \url{http://www.logicandanalysis.org}}
and high-profile advocates like Terry Tao;%
\footnote{\label{f5}The field has also had its share of high-profile
detractors like Errett Bishop \cite{Bi77} and Alain Connes \cite{CLS}.
Their critiques were analyzed in Katz--Katz \cite{11a},
Katz--Leichtnam \cite{13d}, Kanovei--Katz--Mormann \cite{13c}, and
Sanders \cite{Sa17}; see also material in Section~\ref{s5} around
note~\ref{f29}.  For further details on Connes and the taming of
differential geometry see note\;\ref{f24}.  Additional criticisms were
voiced by J.\;Earman \cite{Ea75}, K.\;Easwaran \cite{Ea14},
H.\;M.\;Edwards \cite{Ed07}, G.\;Ferraro \cite{Fe04},
J.\;Grabiner\;\cite{Gr81}, J.\;Gray\;\cite{Gr15}, P.\;Halmos
\cite{Ha85}, H.\;Ishiguro \cite{Is90}, K.\;Schubring \cite{Sc05},
Y.\;Sergeyev \cite{Se15}, and D.\;Spalt \cite{Sp02}.  These were dealt
with respectively in Katz--Sherry \cite{13f}, Bascelli et
al.\;\cite{14a}, Kanovei--Katz--Sherry \cite{15b}, Bair et
al.\;\cite{17a}, Borovik--Katz \cite{12b}, B\l{}aszczyk et
al.\;\cite{17d}, B\l{}aszczyk et al.\;\cite{16b}, Bascelli et
al.\;\cite{16a}, B\l aszczyk et al.\;\cite{17e}, Gutman et
al.\;\cite{17g}, Katz--Katz \cite{11b}.}
see e.g., Tao \cite{Ta14}, Tao--Vu \cite{TV}.  The time may be ripe
for a survey of some of the approaches to the field.

\subsection{Audience}

The text presupposes some curiosity about infinitesimals in general
and Robinson's framework in particular.  While the text is addressed
to a somewhat informed audience not limited to specialists in the
field, an elementary introduction is provided in Section~\ref{s2}.

Professional mathematicians and logicians curious about Robinson's
framework, as well as mathematically informed philosophers are one
possible (proper) subset of the intended audience, as are physicists,
engineers, and economists who seem to have few inhibitions about using
terms like \emph{infinitesimal} and \emph{infinite number}.%
\footnote{See e.g., a quotation in Delfini--Lobry \cite{DL} from
\emph{Berkeley Physics Course}, Crawford \cite{Cr66} and
Section~\ref{s71s}.}

\subsection{The CD+II mindset}
\label{s11}

A survey of this sort necessarily enters into a relationship of
constructive (if not subversive) engagement with a pair of assumptions
commonly held among mathematicians, namely
\begin{enumerate}
\item
the Cantor--Dedekind postulate (CD) identifying the line in physical
space with the real number line (see Ehrlich\;\cite{Eh06} and
Katz--Katz \cite{12c}), and
\item
the \emph{Intended Interpretation} hypothesis (II), entailing an
identification of a standard~$\mathbb{N}$ in its set-theoretic
context, on the one hand, with ordinary intuitive counting numbers, on
the other.
\end{enumerate}
We will deal with the II in more detail in Section~\ref{s23}.  How the
engagement with the CD+II mindset plays itself out should become
clearer in the sequel.

\subsection{Summary of perspectives}

This article was inspired in part by the posting of Joel Hamkins at
Math Overflow~\cite{Ha15b}.  Hamkins wrote: ``There are at least three
distinct perspectives one can naturally take on when undertaking work
in nonstandard analysis.  In addition, each of these perspectives can
be varied in two other dimensions, independently. Those dimensions
are, first, the order of nonstandardness (whether one wants
nonstandardness only for objects, or also for functions and
predicates, or also for sets of functions and sets of those and so
on); and second, how many levels of standardness and nonstandardness
one desires.''

We shall describe the three perspectives and discuss their
similarities and differences in Sections~\ref{s2} through~\ref{s4}.
The breakdown into three main perspectives parallels that in
Bair--Henry \cite{BH08}.  Sections~\ref{s7b} through~\ref{s7} explore
additional aspects of Robinson's framework.  The survey
Benci--Forti--Di Nasso \cite{Be06} was an earlier effort in this
general direction.

\section{Construction perspective}
\label{s2}

Hamkins wrote:%
\footnote{The quotations in this section have been slightly edited.}
``In this perspective, one thinks of the nonstandard universe as the
result of an explicit construction. In the most basic instance, one
starts with the familiar ordered field of real numbers
$\langle\R,+,\times,<, 0,1\rangle$ and extends it to a larger ordered
field.''  

The larger field then necessarily contains infinitesimals and
infinitely large numbers.  Constructing such an extension by itself is
easy,%
\footnote{The simplest example of such an extension is the field of
rational functions (quotients of polynomials with real coefficients),
linearly ordered by defining~$f < g$ if and only if~$f(x) < g(x)$ for
all sufficiently large~$x$.  Then real numbers are embedded in it as
constant functions, the function~$f(x) = \frac{1}{x}$ represents an
infinitesimal, and the function~$g(x) = x$ represents an infinitely
large number.  More advanced examples are provided by Levi-Civita
fields; see Lightstone--Robinson \cite{LR}.}
but not just any extension will do. In order for it to serve as a tool
for analysis, one needs at least to be able to extend every
function~$f$ of interest defined on~$\mathbb{R}$ to a function~$\astf$
defined on the larger field so that
$\astf\!\!\!\downharpoonright_{\,\R}\;=f$.  This extension is expected
to preserve the important properties of~$f$ (see Section~\ref{s25} for
an introductory discussion of the Transfer Principle).  We will sketch
an ultrapower construction of such a hyperreal
extension~$\R\hookrightarrow\astr$.

\subsection
{Ideal points in projective geometry} 
\label{s31}

As a motivational comment, it may be helpful to compare extending the
field~$\R$ to adding ideal points at infinity in projective geometry.%
\footnote{Such an analogy is meaningful in the context of
\emph{ordered} fields, whereas the addition of points at infinity in
projective geometry makes sense for an arbitrary field.}
In projective geometry, an affine plane defined by the
equation~$ax+by+cz=d$ is outfitted with an assortment of ideal points
at infinity, one for each pencil of parallel lines in the plane.  The
new points are viewed as satisfying the ``same'' equation once one
introduces homogeneous coordinates~$ax_1+bx_2+cx_3=dx_4$.  In addition
to ideal points being added to the ambient space, each substructure,
namely each line, is similarly enriched by the addition of one of
these points.  This is analogous to a hyperreal extension presented
below where not only the extension~$\astr$ itself has additional
points, but every object, such as subset, or function, is similarly
enriched.

At the beginning of the 17th century Johannes Kepler invoked a
\emph{principle of continuity} to justify (at least) two distinct
procedures: a continuous sweep of all conics aided by ideal points at
infinity (a prototype of the modern theorem that all conics are
projectively equivalent), and a view of a circle (or more general
curves) as an infinite-sided polygon.  The latter view found an avid
adherent in the person of Gottfried Leibniz, whose \emph{law of
continuity} postulated that the rules valid in the finite realm remain
valid in the infinite realm; see Section~\ref{s85} and Katz--Sherry
\cite{13f} for additional details.

\subsection{The ultrapower construction}
\label{s24}

Let~$\R^{\N}$ denote the ring of infinite sequences of real numbers,
with arithmetic operations defined termwise.  Then we have a totally
ordered field
\begin{equation}
\label{52}
\astr=\R^{\N}\!/\text{MAX}
\end{equation}
where ``MAX'' is a suitable maximal ideal.  To produce such a maximal
ideal, one can exploit a \emph{finitely additive} measure~$\xi$,
\[
\xi\colon\mathcal{P}(\N)\to\{0,1\}
\]
(thus~$\xi$ takes only two values,~$0$ and~$1$) taking the value~$1$
on each cofinite set, where~$\mathcal{P}(\N)$ is the set of subsets
of~$\N$.  For each pair of complementary subsets of~$\N$, such a
measure~$\xi$ ``decides'' in a coherent way which one is
``negligible'' (i.e., of measure~$0$) and which is ``dominant''
(measure~$1$).

The ideal MAX consists of all ``negligible'' sequences~$\langle
u_n\rangle$, i.e., sequences which vanish for a set of indices of
measure~$1$, namely,
\[
\xi\big(\{n\in\N\colon u_n=0\}\big)=1.
\]
The subset~$U_\xi\subseteq\mathcal{P}(\N)$ whose members are sets of
measure~$1$ is called a \emph{free} (or \emph{nonprincipal})
ultrafilter.

Note the formal analogy between \eqref{52} and the construction of the
real numbers as equivalence classes of Cauchy sequences of rational
numbers.  In both cases, the subfield is embedded in the superfield by
means of the constant sequences, and the ring of sequences is factored
by a \emph{maximal ideal}.

Elements of~$\astr$ are called \emph{hyperreal numbers}.  The
field~$\R$ is embedded into~$\astr$ via a mapping that assigns to
each~$r \in \R$ the hyperreal~$\asr$, namely, the equivalence class of
the constant sequence with value~$r$; we shall identify~$r$ and
$\asr$.  The equivalence classes of sequences with terms
from~$\mathbb{N}$ form the set~$\astn$ of \emph{hypernatural} numbers.
The equivalence class of the sequence~$\langle n\colon
n\in\mathbb{N}\rangle$ (respectively,~$\langle\frac{1}{n}\colon
n\in\mathbb{N}\rangle$) is an infinitely large integer (respectively,
an infinitesimal).

The \emph{order}~$\,{}^{\ast}\!\!\!<$ on~$\astr$ is defined by setting
\[
[\langle u_n\rangle] \;{}^{\ast}\!\!\!< [\langle v_n\rangle]
\text{\quad if and only if\quad} \xi(\{n\in\N\colon u_n<v_n\})=1.
\]

This order extends the ordering~$<$ of~$\R$ as follows: for~$r , s \in
\R$, we have~$r<s$ if and only if~$r \;{}^{\ast}\!\!\!< s$ (that
is,~$\asr \;{}^{\ast}\!\!\!< {}^{\ast}\!s$). Hence we can drop the
asterisk on~$<$ with no risk of confusion.

\emph{Addition}~${}^{\ast}\!+$ on~$\astr$ is defined by
\[
[\langle u_n\rangle]\;{}^{\ast}\hspace{-5pt}+[\langle v_n\rangle] =
[\langle u_n+v_n\rangle]
\]
and similarly for multiplication.  These operations extend the
corresponding operations on~$\R$, and hence the asterisks can and
shall be dropped.  Moreover,~$\astr$, with these operations and
ordering, is an ordered field extending~$\R$.

The equivalence classes of sequences with rational terms form the
subfield~$\astq$ consisting of \emph{hyperrational} numbers.

A hyperreal number~$x$ is \emph{finite} (or \emph{limited}) if~$-r < x
< r~$ for some~$r\in\R$, and \emph{infinitesimal} if~$-r<x < r~$ for
all~$r\in\R$,~$r>0$.  Every finite hyperreal number~$x$ rounds off to
the nearest real number, called its \emph{shadow} (or \emph{standard
part}) and denoted~$\sh(x)$; here the difference~$x-\mathbf{sh} (x)$
is infinitesimal.

Similarly, the ultrapower construction allows one to extend arbitrary
functions and relations from~$\R$ to~$\astr$.  Every
function~$f\colon\R\to\R$ has a \emph{natural extension}
$\astf\colon\astr\to\astr$ acting componentwise:~$\astf([\langle
u_n\rangle])=[\langle f( u_n) \rangle ]$.  If~$S \subseteq \R$, then
the \emph{natural extension} of~$S$, denoted~$\asts$, is defined by
\[
[\langle u_n\rangle] \in \asts \text{\; if and only if \;}
\xi\left(\{n\in\N\colon u_n^{\phantom{I}}\in S\}\right)=1;
\]
similarly for relations on~$\R$.  The asterisks are habitually dropped
when there is no risk of confusion; typically they are dropped for
functions and operations, but not for sets.

One of the first treatments of an ultrapower-type construction
appeared in Hewitt \cite{Hew48}.

\subsection{Transfer Principle}
\label{s25}

The \emph{Transfer Principle} is a type of theorem that, depending on
the context, asserts that rules, laws or procedures valid for a
certain number system, still apply (i.e., are ``transfered'') to an
extended number system.  Thus, the familiar
extension~$\Q\hookrightarrow\R$ preserves the property of being an
ordered field.  To give a negative example, the
extension~$\R\hookrightarrow\R\cup\{\pm\infty\}$ of the real numbers
to the so-called \emph{extended reals} does not preserve the field
properties.  the extension~$\R\hookrightarrow\C$ preserves the field
axioms, but does not preserve the property of not having a square root
of~$-1$.

The hyperreal extension~$\R\hookrightarrow\astr$
preserves \emph{all} first-order properties.  The result in essence
goes back to \L o\'s\;\cite{Lo55}.

For example, the identity~$\sin^2 x+\cos^2 x=1$ remains valid for all
hyperreal~$x$, including infinitesimal and infinite
inputs~$x\in\astr$.  Another example of a transferable property is the
property that
\[
\text{for all positive }x,y, \text{ if } x<y \text{ then }
\frac{1}{y}<\frac{1}{x}.
\]
The Transfer Principle applies to formulas like that characterizing
the continuity of a function~$f\colon\R\to\R$ at a point~$c\in\R$:
\[
(\forall\varepsilon > 0)(\exists\delta > 0)(\forall
x)\big[\,|x-c|<\delta \;\Rightarrow\; |f(x)-f(c)|<\varepsilon\big];
\]
namely, formulas that quantify over \emph{elements} of~$\R$.  See
Lindstr\o m\;\cite{Li88}, Goldblatt \cite{Go98}, and
Gordon--Kusraev--Kutateladze \cite{GKK} for additional details.

\subsection{Elementary applications}

Let~$H$ be an infinite hypernatural number (more formally,
$H\in\astn\setminus\N$) and~$z\in\C$.  We retrieve formulas of the
sort that already appeared in Euler; see Bair et al.\;\cite{17a}.  In
the following we will exploit hypercomplex numbers~$\astc=\astr +
\sqrt{-1} \; \astr$.  For example we obtain
\[
e^z\approx\left(1+\tfrac{z}{H}\right)^H
\]
where~$\approx$ is the relation of infinite proximity.%
\footnote{Here~$z\approx w$ if and only if~$|z-w|$ is infinitesimal.}
In particular, the identity~$e^{i\pi}=-1$ takes the form
\begin{equation}
\label{e22}
\left(1+\tfrac{i\pi}{H}\right)^H\approx -1.
\end{equation}
Since the principal branch of~$z\mapsto H\sqrt[H]{z}$ is Lipschitz
near~$z=-1$, it preserves the relation of infinite proximity there.%
\footnote{Here we use the principal branch of the root.  The point is
that the Lipschitz constant for the internal function in question can
be chosen finite, since the Lipschitz constant does not depend on the
index assuming that we work in a small neighborhood of~$-1$.
Therefore the Lipschitz property with the same constant holds by the
Transfer Principle (see Section~\ref{s25}) for all infinite~$H$, as
well.

Alternatively, we have the primitive root
$\sqrt[H]{-1}=\cos\frac{\pi}{H}+i\sin\frac{\pi}{H}$, hence
$H\!\sqrt[H]{-1}=H\cos\frac{\pi}{H}+iH\sin\frac{\pi}{H} \simeq H +
i\pi$ since~$\cos \frac{\pi}{H} \simeq 1$ and
$\frac{H}{\pi}\sin\frac{\pi}{H}\simeq1$.}
Therefore the relation~\eqref{e22} implies
\[
H\sqrt[H]{-1}\approx H+i\pi
\]
yielding the following formula for~$\pi$:
\[
\pi\approx \frac{H\sqrt[H]{-1}-H}{\sqrt{-1}}.
\]
This expression for~$\pi$ can be interpreted as the derivative of
$(-1)^x$ at~$x=-\frac{1}{2}$.%
\footnote{The derivative is computed from the definition, using the
infinitesimal~$\frac{1}{H}$.  The derivative of~$(-1)^x$
is~$(\ln(-1))(-1)^x$.  We substitute~$x=-\frac{1}{2}$ to obtain~$\pi$
since~$\ln(-1)=\pi i$ and~$(-1)^{-1/2}=-i$.}

To mention another elementary application, the heuristic principle
that the period of small oscillations of the pendulum is independent
of their amplitude finds precise mathematical implementation for
oscillations with infinitesimal amplitude Kanovei--Katz--Nowik
\cite{16c}.  A transparent treatment of the Jordan curve theorem
exploiting Robinson's framework appears in Kanovei--Reeken
\cite{KR98}.  For additional applications see Section~\ref{s7b}.%
\footnote{See also the references provided in note~\ref{f3}.}

\subsection{Compactness, saturation}

We have described the ultrapower construction with respect to a fixed
free ultrafilter~$U$ on the natural numbers.  Another way to implement
these ideas is to use the compactness theorem of first-order logic, as
Robinson originally did in \cite{Ro66}.

In some applications of nonstandard analysis one may wish to employ
stronger saturation properties (see below) than those satisfied by
\eqref{52}; this can usually be accomplished by using special
ultrafilters on larger index sets.

The property of \emph{saturation} of a hyperreal field is analogous to
compactness in classical analysis, and can be expressed as follows.

\begin{definition}
\label{f11}
A hyperreal field~$\astr$ is \emph{countably saturated} if every
nested infinite sequence of nonempty internal sets (see
Sections~\ref{s35} and~\ref{s51}) has a common element.
\end{definition}

More generally, a structure is called~$\kappa$-saturated if for every
collection of properties (expressible in the language of the
structure) of cardinality strictly less than~$\kappa$, each finite
subcollection of which is satisfied by some element of the structure,
there is an element of the structure that satisfies all of the
properties simultaneously.  Note that~\eqref{52} is always countably
saturated.%
\footnote{Traditionally, countable saturation is the same as
$\aleph_1$-saturation, because a collection of cardinality strictly
less than~$\aleph_1$ is the same as a countable (including finite,
which is a trivial case anyway) collection.}

The existence of a common point for a decreasing nested sequence of
compact sets~$\langle K_n\colon n\in\N\rangle$ can be seen as a
special case of the saturation property.  Indeed, the decreasing
nested sequence of internal sets,~$\langle{}^\ast\!K_n\colon
n\in\N\rangle$, has a common point~$x$ by saturation.  But for a
compact set~$K_n$, every point of~${}^\ast\!K_n$ is nearstandard
(i.e., infinitely close to a point of~$K_n$).  In particular,
$\sh(x)\in K_n$ for all~$n$, as required.

Hamkins wrote in \cite{Ha15b}: ``To give a sample consequence of
saturation, we observe that every infinite graph, no matter how large,
arises as an induced subgraph of a hyperfinite graph in any
sufficiently saturated model of nonstandard analysis.  This often
allows one to undertake finitary constructions with infinite graphs,
modulo the move to a nonstandard context.''

``The ultrapower construction can be extended to the power set of~$\R$
and its higher iterates.%
\footnote{See Section~\ref{s3}.}
In the end, one realizes that one might as well take the ultrapower of
the entire set-theoretic universe~$V$. One then has a copy of the
standard universe~$V$ inside the nonstandard realm~$\astv$, which one
analyzes and understands by means of the ultrapower construction
itself.%
\footnote{However, this idea runs into technical issues.  The standard
universe~$V$ is not a set, and the ultrapower of the membership
relation in~$V$ is not well-founded.  We discuss the ways in which
nonstandard analysis deals with these issues in Section~\ref{s4}.}
A few applications of nonstandard analysis exploit not just a single
ultrapower, but an ultrapower construction repeated along some linear
order.  Such iterated ultrapower constructions give rise to many
levels of nonstandardness, a useful feature.  Ultimately one is led to
adopt all of model theory as one's toolkit.'' \cite{Ha15b}

\section{Superstructure perspective}
\label{s3}
As Hamkins points out, before long, one wishes nonstandard analogues
of the power set~$\mathcal{P}(\R)$ and its higher iterates.  We will
now implement this idea.

\subsection{Ultrapower of the power set of~$\R$}
\label{s35}

Elements of the ultrapower of~$\mathcal{P}(\R)$ are the equivalence
classes of sequences~$\langle A_n \colon n\in \N \rangle$ of
subsets~$A_n\subseteq \R$ where sequences~$\langle A_n\rangle$
and~$\langle B_n \rangle$ are defined to be equivalent if and only if
we have~$\{ n \in \N \colon A_n=B_n \}\in U_{\xi}$.

The relation~$\astin$ between~$x =[\langle x_n\rangle]$ in~$\astr$
and~$[\langle A_n\rangle]$ in the ultrapower of~$\mathcal{P}(\R)$ is
defined by setting
\[
x\;\astin [\langle A_n\rangle] \text{\quad if and only if\quad} \{
n\in\N \colon x_n\in A_n\}\in U_{\xi}.
\]

In this construction, the sets in the ultrapower of~$\mathcal{P}(\R)$
are not subsets of\,~$\astr$ (they are equivalence classes of
sequences), and the membership relation~$\astin$ is not the usual
membership relation~$\in$. Both of these problems are solved by the
following stratagem.  With each equivalence class~$[\langle A_n\colon
n\in \N \rangle]$ in the ultrapower of~$\mathcal{P}(\R)$ we associate
a subset~$A$ of~$\astr$ as follows:
\[ x \in A \text{\quad if and only if\quad} x \; \astin [\langle
A_n\rangle].
\]
The subsets of~$\astr$ associated with the members of the ultrapower
of~$\mathcal{P}(\R)$ in this way are called \emph{internal sets}.  The
collection of all internal subsets of~$\astr$ is denoted
${}^{\ast}\mathcal{P}(\R)$.  We have~${}^{\ast}\mathcal{P}(\R)
\subsetneqq \mathcal{P}(\astr)$. The inclusion is strict because there
are subsets of~$\astr$ that are not internal; for example, the subset
$\R\subseteq\astr$ is not internal.

In the special case when~$A_n=S\subseteq\R$ for all~$n\in\N$, the
associated internal set is the natural extension of~$S$ introduced
earlier and denoted~$\asts$.  We shall also refer to~$\asts$ as the
\emph{standard copy} of~$S$.

\begin{definition}
\label{s42}
A \emph{hyperfinite set}~$X$ is an internal set associated with the
equivalence class of a sequence~$\langle A_n\colon
n\in\mathbb{N}\rangle$ where each~$A_n$ is finite.
\end{definition}

Such a set admits an internal enumeration by a hypernatural number,
denoted~$|X|$.  In more detail, the hypernatural number~$|X|$ is the
coset of the sequence of integers~$|A_n|$ where~$|A_n|$ is the
ordinary number of elements in the finite set~$A_n$.

It can be shown that every subset of~$\astr$ definable in
$\astr\cup{}^{\ast}\mathcal{P}(\R)$ from internal parameters is
internal, a fact known as the \emph{internal definition principle}.
Thus, if one defines a new object by a formula exploiting only
internal objects, the new object is necessarily internal, as well.
For instance, if one defines~$T$ to be the set of integers between~$1$
and an infinite hyperinteger~$H$, then~$T$ is necessarily internal.
Thus one needn't specify a presentation of the internal set~$T$ with
respect to the ultrapower construction.

The Transfer Principle now applies to formulas that quantify over both
\emph{elements} and \emph{sets of elements} of~$\R$.  Such statements
transfer into statements true about all \emph{internal} sets, but not
necessarily about all sets.  Thus for example every nonempty internal
set of hyperreals bounded above has a supremum in~$\astr$, but not
every set of hyperreals does.%
\footnote{The subset~$\R \subseteq \astr$ is a counterexample: it is
bounded above by every positive infinitely large number~$L$, but it
does not have a least upper bound: if~$L$ is an upper bound for~$\R$,
then~$L-1$ is similarly an upper bound.}

This construction can easily be extended to the second and higher
iterates of the power set operation. But the few examples already
given indicate that, for the practice of analysis, it suffices to know
that an extension of the standard structure exists, with suitable
properties, such as the Transfer Principle; the actual construction is
of secondary importance.

The superstructure framework enables one to step back from
the details of the actual (ultrapower) construction.%
\footnote{Such a perspective is comparable to the way mathematicians
think of real numbers.  They certainly wish to know that real numbers
can be conceived as equivalence classes of Cauchy sequences of
rationals.  However, he also wishes to be able to ignore the actual
construction of~$\mathbb{R}$ entirely in his everyday work, and to
consider real numbers as individuals (atomic entities) and~$\R$ simply
as a complete ordered field.}
It is still the most popular vehicle for the practice of nonstandard
analysis; see for example Robinson--Zakon \cite{RZ}, Chang--Keisler
\cite{CK}, Albeverio et al.\;\cite{Al86}.  We describe it in the next
subsection.

\subsection{The superstructure framework}
\label{s51}

For any set of individuals~$X$, the \emph{superstructure over}~$X$ is
obtained from ~$X$ by taking the power set countably many times.  In
more detail, one defines the superstructure recursively by
setting~$V_0(X) = X$,~$V_{n + 1}(X) = V_n(X)\cup
\mathcal{P}(V_{n}(X))$ for~$n \in \omega$, and~$V_{\omega}(X) =
\bigcup_{n < \omega} V_{n}(X)$.

The superstructure framework for nonstandard analysis consists of two
superstructures,~$V_{\omega}(X)$ and ~$V_{\omega}(Y)$ where~$X
\subsetneqq Y$ are infinite sets, and a mapping~$\ast\colon
V_{\omega}(X) \to V_{\omega}(Y)$ such that~${}^{\ast}\! a =a~$ for
all~$a \in X$,~$\astx= Y$, and~$A \in B$ implies~$\astA\in\astb$
for~$A, B \in V_{\omega}(X)$.  From now on, we take ~$X = \mathbb{R}$.
This facilitates comparison with the construction perspective;
meanwhile the discussion in this section applies fully to the general
situation.  Note that~$V_{\omega}(\mathbb{R})$ contains all objects of
interest to classical analysis, such as the fields of real and complex
numbers, the higher-dimensional spaces~$\R^n$ and~$\mathbb{C}^n$,
functions on these, collections of functions, Hilbert spaces~$\ell^2$
and~$L^2$, functionals, etc.

We shall refer to~$V_{\omega} (\mathbb{R})$ as the \emph{standard
universe} and to ~$V_{\omega} (\astr)$ as the \emph{nonstandard
universe}.  Thus, every set~$S$ in the standard universe~$V_{\omega}
(\mathbb{R})$ has a \emph{standard copy}~${}^{\ast}\!S$ in the
nonstandard universe (cf.\;Sections~\ref{s24} and~\ref{s35}).  The
sets contained in~$V_{\omega} (\astr)$ that are elements of
some~$\asts$ for~$S \in V_{\omega} (\mathbb{R})$ are called
\emph{internal}, and~$\astv_{\omega} (\mathbb{R})$ denotes the
collection containing the elements of~$\astr$ together with all
internal sets; we refer to it as the \emph{internal universe}.  Note
that~$\astv_{\omega} (\mathbb{R}) = \bigcup_{n < \omega}
\astv_{n}(\mathbb{R})$.  We have a proper inclusion~$\astv_{\omega}
(\mathbb{R}) \subsetneqq V_{\omega} (\astr)$; for example,~$\mathbb{R}
\subseteq \astr$ is not an internal set.  Obviously,~$V_{\omega}
(\mathbb{R}) \subseteq V_{\omega} (\astr)$, but all sets
in~$V_{\omega} (\mathbb{R})$, except for the hereditarily finite ones,
are \emph{external} (not internal).

The main principle that connects the two superstructures is the
\emph{Transfer Principle}, which posits that any property expressible
in the language of the superstructures by a bounded quantifier
formula%
\footnote{A \emph{bounded quantifier formula} is a formula where all
quantifiers have the form~$(\exists x \in y)$ or~$(\forall x \in y)$.}
holds in the standard universe~$V_{\omega}(\mathbb{R})$ about some
objects if and only if it holds for the standard copies of those
objects in the internal universe~$\astv_{\omega} (\R)$ or,
equivalently, in the nonstandard universe $V_\omega (\astr)$.

%

By the Transfer Principle,~$\langle\astr, {}^{\ast}\!+,
{}^{\ast}\!\times, {}^{\ast}\!\!\!<, 0,1\rangle$ is a field of
hyperreals with the properties as in Section~\ref{s24}.  In
particular, ~$\astr$ contains nonstandard numbers, such as
infinitesimals and infinitely large numbers.

Many arguments in nonstandard analysis rely on a \emph{saturation
principle} (see Definition~\ref{f11}).  Two useful forms of saturation
available in the superstructure framework are the following:
\begin{enumerate}
\item 
If~$A_1 \supseteq A_2 \supseteq \ldots{}$ are nonempty internal sets,
then~$\displaystyle\bigcap_{n < \omega} A_n \neq \varnothing$.
\item
If~$\{A_i\colon i \in I\}$ is a collection of sets in~$V_{\omega}
(\mathbb{R})$ possessing the finite intersection property, then
$\displaystyle\bigcap_{i \in I} \astA_i \neq \varnothing$.
\end{enumerate}

We shall not give any details of the construction of superstructure
frameworks. The advantage of this approach lies precisely in the fact
that there is no need to do so. One can accept the framework and
proceed to use it in any area of mathematics, while leaving its
construction to specialists.  Such constructions proceed along the
lines of Section~\ref{s35}; see \cite{CK}.  The internal
universe~$\astv_{\omega} (\mathbb{R})$ is isomorphic to the union of
the ultrapowers of~$V_{n} (\mathbb{R})$ for all~$n \in \omega$;%
\footnote{More precisely, to the bounded ultrapower of
$V_{\omega}(\R)$.}
the \emph{nonstandard universe}~$V_{\omega}(\astr)$ contains
additional, external sets.

\subsection
{An application: Loeb's construction of Lebesgue measure}

As an example of the interplay between the various kinds of sets in
the superstructure, we describe a nonstandard construction of the
Lebesgue measure on~$[0,1]$ due to P.\;Loeb \cite{Lo75} (with an
improvement that seems to have appeared first in
Hrbacek\;\cite{Hr79}).

We fix an infinite integer~$N \in \astn$.  Let~$t_i=\frac{i}{N}$
for~$i=0,\ldots, N$.  The hyperfinite set~$T=\{t_0,t_1,\ldots,t_N\}$
(see the sentence following Definition~\ref{s42}) is referred to as
``hyperfinite time line" in Albeverio et al.\;\cite{Al86}.  Let~$\mu$
be the counting measure on~$T$, i.e.,~$\mu(X)= |X|/|T|$ for each
internal set~$X \subseteq T$; then~$\mu$ itself is also internal.  For
every~$A \subseteq [0,1]$, the set
\[
\mathbf{sh}^{-1}[A] = \{z\in{}^{\ast}\hspace{-0.5pt}[0,1]
\colon\mathbf{sh}(z)\in A\}
\]
is the, generally external, set of
all~$z\in{}^{\ast}\hspace{-0.5pt}[0,1]$ such that~$z \approx x$ for
some~$x\in A$.  Finally, we define
\[
m_L(A)= \mathbf{inf} \Bigl\{ \mathbf{sh}(\mu(X))\colon X \subseteq T
\text{ is internal and }\mathbf{sh}^{-1}[A] \cap T \subseteq X
\Bigr\}.
 \]
The collection in braces is a (nonempty, bounded below) subset
of~$\R$, so the infimum exists.  It can be shown (see~\cite{Al86} for
some details) that~$m_L$ is the Lebesgue outer measure on~$[0,1]$; in
particular, a set~$A \subseteq [0,1]$ is Lebesgue measurable if and
only if one has~$m_L(A) + m_L([0,1] \smallsetminus A) = 1$, and in
this case~$m_L(A)$ is precisely the Lebesgue measure of~$A$.

\subsection{Axiomatizing}

It is often thought worthwhile (for example, for pedagogical reasons;
see \cite{17h}) to develop the subject purely from general principles
that make the nonstandard arguments succeed.  This approach is similar
to, say, deriving results from the axioms for algebraically closed
fields rather than arguing about these mathematical structures
directly.  As an example of such an exposition, Keisler's calculus
textbook is based on five simple axioms A through E that hold in every
superstructure framework; see Keisler \cite{Ke86} and \cite{Ke76}.
 
 \bigskip
 The next step would be to take the entire universe of
sets,~$V$, as a ``superstructure."  We already indicated that this
move gives rise to some issues, both technical and fundamental.%
\footnote{While the ultrapower of~$V_n (\mathbb{R})$ for~$n \in
\omega$ is well-founded, and hence isomorphic to a transitive set, by
the Mostowski collapse lemma, when one takes the ultrapower
of~$V_{\omega}(\mathbb{R})$, or even of the entire~$V$, the membership
relation~$\astin$ is non-well-founded, and therefore the ultrapower is
not isomorphic to a transitive set or class in the framework of ZFC.
This situation calls for an ontological commitment beyond what is
required by ZFC, and is best handled axiomatically.}
 It is considered in the next section.

\section{Axiomatic perspective}
\label{s4}

\subsection{Nonstandard set theories}
\label{s51b}

As mentioned above, taking an ultrapower of the entire set-theoretic
universe~$V$ is an attractive idea.  Since~$V$ is not a set in ZFC,
the best way to handle this construction is axiomatic. The earliest
axiomatizations of ultrapowers of $V$ were proposed by Petr Vop\v enka
in \cite{VH} as part of his project of axiomatizing the method of
forcing.  The first nonstandard set theories that extend ZFC and can
serve as a framework for the practice of nonstandard analysis were
developed independently by Hrbacek \cite{Hr78} and Nelson
\cite{Ne77}.%
\footnote{Vop\v enka also developed a nonstandard set theory that
enables infinitesimal methods. His Alternative Set Theory (AST) is
incompatible with ZFC and is not considered here.  See Sochor
\cite{So76} and Vop\v enka \cite{Vo79}.}
We refer to Kanovei--Reeken \cite{KR} for a comprehensive survey of
the field.

Nonstandard set theories typically possess the set membership
predicate~$\in$ and a unary predicate of standardness~$\st$;
here~$\st(x)$ reads \mbox{``$x$ is standard.''}  The analog of
$\st(x)$ in the superstructure framework is ``$x$ is a
standard copy."

A helpful classification tool is a distinction between \emph{internal}
and \emph{external} set theories.
 
\emph{Internal} set theories axiomatize only the standard and internal
sets.  Edward Nelson's IST is the best known example.  The theory IST
postulates all the axioms of ZFC in the pure~$\in$-language, together
with three extra axioms (more precisely, axiom schemata) of
Idealization, Standardization, and Transfer, which govern the
relations between standard and nonstandard sets.  The theory is
particularly attractive because of its simplicity.  Kanovei
\cite{Ka91} modified IST by limiting its universe to bounded sets.%
\footnote{A set is \emph{bounded} if it is an element of a standard set.}
The resulting internal theory BST%
\footnote{BST was explicitly formulated in \cite{Ka91}.  Implicitly it
is equivalent to the theory of internal sets in some of the theories
developed in Hrbacek \cite{Hr78}; see Section~\ref{s53}.}
provides the same mathematical tools as IST, but it has better
metamathematical behavior; see Section~\ref{s52}.

\subsection{The axioms of BST}
\label{s52b}

We shall now formulate the axioms of BST.  The
notation~$(\exists^{\st} x)\ldots{}$
and~$(\forall^{\st}x)\ldots{}$ abbreviates respectively the
formulas~$(\exists x)(\st(x)\wedge\ldots)$ and~$(\forall
x)(\st(x)\Rightarrow\ldots)$.  The
notation~$(\forall^{\st\textbf{fin}} a)\ldots{}$ is shorthand
for the formula
\[
(\forall a)\left((\st(a)\wedge (a\text{\;is
finite}))\Rightarrow\ldots_{\phantom{I}} ^{\phantom{I}} \right).
\]
The axioms of BST include those of ZFC (with \emph{Separation} and
\emph{Replacement} only for formulas in the~$\in$-language) together
with the following four additional principles.\\

\subsection*{Boundedness}
~\\~$(\forall x)(\exists^{\st} y ) (x \in y).$

This means that every set is an element of \emph{some} standard set.
This is the main difference between BST and IST.\, In IST there is a
set that contains \emph{all} standard sets as elements.\\

\subsection*{Bounded Idealization}
~\\~$(\forall^{\st} A ) \left[ (\forall^{\st{}\textbf{fin}} a \subseteq
A ) (\exists y)( \forall x \in a ) \Phi (x, y) \equiv (\exists
y)(\forall^{\st} x \in A ) \Phi (x,y) \right]$\\ where~$\Phi(x, y)$ is
any formula in the~$\in$-language, possibly with parameters.

This axiom is a version of saturation or compactness; see
Definition~\ref{f11}.  Loosely speaking, if a collection of
properties~$\Phi(x,y)$,~$x\in A$ is finitely satisfiable, that is, for
every standard finite set~$a = \{x_1,\ldots, x_n\}$ of elements of~$A$
there is some~$y$ such that~$\Phi(x_1,y) \wedge \ldots \wedge
\Phi(x_n,y)$ holds, then there is some~$y$ that satisfies~$\Phi(x,y)$
simultaneously for all standard~$x \in A$.  As a basic example,
consider the property
\[
(i,j \in \N)\wedge(i<j).
\]
For every standard finite set~$a = \{i_1,\ldots, i_n\} \subseteq \N$
there is~$j \in \N$ such that~$(i_1<j)\wedge\ldots\wedge(i_n<j)$
(namely, take~$j=\max\{i_1,\ldots,i_n\} +1$).  Hence by \emph{Bounded
Idealization} there exists an element~$j \in \N$ such that~$i < j$ for
all standard~$i$; i.e.,~$j$ is an infinitely large natural number.\\

\subsection*{Transfer}
~\\~$(\exists x) \Phi (x) \equiv (\exists^{\st} x) \Phi (x)$ \\
where~$\Phi(x)$ is any formula in the~$\in$-language, with standard
parameters.

An equivalent version of \emph{Transfer} is that any statement
$\Phi(a_1,\ldots, a_n)$ (in the~$\in$-language) about standard sets
$a_1,\ldots, a_n$ is true when interpreted in the standard universe if
and only if it is true when interpreted in the internal universe. This
is a modern version of Leibniz's \emph{Law of Continuity}; see
Section~\ref{s31}.\\

\subsection*{Standardization}
For each formula~$\Phi(z)$ in the~$(\st,\in)$-language, possibly with
parameters, we have the following:~\\~$(\forall^{\st} X )
(\exists^{\st} Y ) (\forall^{\st} z ) (z \in Y \equiv z \in X \wedge
\Phi(z))$

No axiom of \emph{Separation} for arbitrary formulas in
the~$(\st,\in)$-language is imposed by BST; for example, the
collection~$\{n\in\N\colon\st(n)\}$ is not a set.%
\footnote{More precisely, BST (as well as IST) proves that there is no
set~$x$ equal to~$\N'=\{n\in\N\colon\st(n)\}$.  Entities like~$\N'$,
that is, those defined as~$\{x \in X\colon \Phi(x)\}$, where~$X$ is a
true set and~$\Phi$ is a formula in the~$(\in,\st)$-language, are
called \emph{external sets} (or sometimes \emph{semisets}), and
typically they are non-sets in BST and IST, unless \st{} can be
eliminated in some fashion.

Yet BST allows for an implicit introduction of external sets by means
of a certain coding defined in \cite{KR95}.  This coding system
involves not only external sets of internal elements like the
collection of all standard integers~$\N'$ above, but also external
sets of external sets, etc.  This results in the construction of a
universe of Hrbacek's external set theory HST, which extends the given
BST universe in the same way that the complex numbers extend the real
line by means of representation of a number~$a+bi$ as a pair~$\langle
a,b\rangle$ of real numbers~$a,b$. See \cite{KR95} or
\cite[Chapter\;5]{KR}.}
In its place one has the \emph{Standardization} principle, to the
effect that for any property~$\Phi(x)$ expressible in
the~$(\st,\in)$-language and any standard set~$X$ there is a standard
set~$Y$ that contains exactly those standard elements of~$X$ that have
the property~$\Phi$. (But~$Y$ and~$\Phi$ do not have to agree on
nonstandard elements of~$X$.) Two simple but important consequences of
the BST axioms, especially of \emph{Standardization}, are the
following:
\begin{enumerate}
\item
If~$k,n \in \N$,~$n$ is standard, and~$k<n$, then~$k$ is also
standard; and
\item
If~$x~$ is a finite real number (i.e.,~$|x| < n$ for some
standard~$n\in \N$), then there is a (unique) standard real number~$r$
such that
\hbox{$r \simeq x$} (such an~$r$ is called the \emph{standard part}
of~$x$ or the \emph{shadow} of~$x$).
\end{enumerate}

We first prove (2).  Assume~$|x| < n$ for some standard~$n \in
\mathbb{N}$.  By \emph{Standardization}, there is a standard set~$Y~$
such that
\[
(\forall^{\mathbf{st}} z)(z \in Y \equiv z\in\mathbb{R}\:\wedge\:z<x).
\] 
Note that~$Y \neq \emptyset$ (since~$-n \in Y$) and~$Y$ is bounded
above by~$n$. By completeness of~$\mathbb{R}$,~$Y$ has a supremum~$r$,
and by \emph{Transfer},~$r~$ is standard.

We now show that~$x \simeq r$, so~$r$ is the standard part of~$x$.  If
not, then~$|r-x| >s >0$ for some standard~$s$.  This means that either
$x > r+s$ or~$x< r-s$.  In the first case,~$r+s \in Y$, contradicting
$r=\sup Y$.  In the second case,~$r-s~$ is an upper bound on~$Y$,
again contradicting~$r=\sup Y$.

Now we show that (1) follows from (2).  Since~$k$ is assumed to be
finite, it has a standard part~$r$. Then~$k$ is the unique integer in
the standard interval~$[r - 0.5, r + 0.5)$, so~$k$ is standard by
\emph{Transfer}.  ~\\

We note that \emph{Boundedness} is a single axiom while \emph{Bounded
Idealization}, \emph{Standardization}, and \emph{Transfer} are
\emph{axiom schemata}, that is, they apply to an arbitrary
formula~$\Phi$ (of a certain type); in this they resemble the schemata
of \emph{Separation} and \emph{Replacement} of ZFC.

The schemata of \emph{Standardization} and \emph{Transfer} are common
with IST.  The schema of \emph{Bounded Idealization} is weaker than
the full \emph{Idealization} of IST, but the \emph{Boundedness} axiom
makes up for it, adding a more comprehensive control over the
interactions between standard and internal sets in the BST set
universe than it is possible in IST.

\subsection{Connection with ultrafilters}

Let us clarify the connection of these axioms with ultrafilters.
Working in BST and given an internal set~$x$ and a standard
ultrafilter~$U$, we say that
\begin{quote}
$x$ \emph{is in the monad of\; }~$U$ \;if\;~$x \in A$ for all
standard~$A \in U$.
\end{quote}
Note that, for~$x$ in the monad of~$U$,~$x$ is nonstandard if and only
if~$U$ is nonprincipal.

From \emph{Boundedness} and \emph{Standardization} it follows that for
every~$x$ there is a standard ultrafilter~$U$ such that~$x$ is in the
monad of~$U$.

From \emph{Bounded Idealization} it follows that for every standard
ultrafilter~$U$ there is an~$x$ such that~$x$ is in the monad
of~$U$. This is a version of saturation.

In fact, for somewhat stronger versions of these statements (the Back
and Forth properties, Hrbacek\;\cite[Section 5]{Hr09}), the
implications can be replaced by equivalences.

As a consequence, the axiom schemata of BST can actually be replaced
by single axioms.  Thus BST is finitely axiomatizable over ZFC; see
Kanovei--Reeken \cite[Section 3.2]{KR}.

The connection goes deeper.  Assuming that~$x$ is in the monad of~$U$,
where~$U$ is a standard ultrafilter on a standard set~$I$, we have a
mapping that assigns to (the equivalence class modulo~$U$ of) each
standard function~$f$ on~$I$ the internal set~$f(x)$. This mapping is
an isomorphism between the standard ultrapower of~$V$ modulo~$U$, with
the~${}^{\ast}\!\!\!\in$-relation, and the class~$\{ f(x) \colon f
\text{ is standard}\, \}$ of internal sets, with the~$\in$-relation
\cite[Section~6.1]{KR}.

\subsection{External theories}
\label{s53}

Two theories proposed in Hrbacek \cite{Hr78} and assigned acronyms HST
and NST in Kanovei--Reeken \cite{KR} are \emph{external} set theories.
They are formulated in the~$(\inte, \st, \in)$-language, where~$\inte$
is a unary predicate of internality (here\;$\inte(x)$ reads ``$x$ is
internal'').  The internal part of these theories satisfies BST, but
they admit also external sets.  This makes them more powerful, but
also more complex.

The two theories differ in the properties of the universe of all
(internal and external) sets.  It turns out that this universe cannot
satisfy all the axioms of ZFC.  Thus, HST allows \emph{Replacement}
(and even its stronger version, \emph{Collection}), while NST makes
available \emph{Power Set} and \emph{Choice}.

The theory KST proposed in Kawai \cite{Ka81} extends IST by external
sets.  Andreev and Gordon developed the nonstandard class theory NCT
\cite{AG}.  Roughly speaking, NCT is to BST what the von
Neumann--G\"{o}del--Bernays set theory is to ZFC; in particular, NCT
has standard, internal and external classes.

The work of Nelson and his followers demonstrated that a lot of
nonstandard mathematics can be carried out by internal means alone;
see Section~\ref{s71s}.  External sets are necessary for some more
advanced constructions, such as Loeb measures.

There are also nonstandard set theories that do not fit into our
two-way classification of axiomatic approaches that well; we mention
only the~$\alpha$-theory proposed in Benci--Di Nasso\;\cite{BD} and
the framework for nostandard analysis developed in ZFC with the axiom
of \emph{Foundation} replaced by the axiom of
\emph{Superuniversality}, by Ballard and Hrbacek\;\cite{BH92}.

\subsection
{Consistency, conservativity and intuitive interpretation}
\label{s52}

Each of these theories is known to be an equiconsistent and
conservative extension of ZFC.  Here \emph{conservativity} means that
any~$\in$-sentence provable in the nonstandard theory is already
provable in ZFC; the converse is clearly true as well.

Therefore one might expect that each model of ZFC can be embedded, as
the class of all standard sets, into a model of the nonstandard
theory.  However, this is false for IST; see Kanovei--Reeken
\cite[4.5]{KR}.  In fact ZFC has to be strengthened, for instance by
adding the global choice axiom and the truth predicate
for~$\in$-formulas, in order to be able to embed its model, as the
class of standard sets, into a model of IST; see \cite[4.6]{KR}.  For
NST this expectation is also false, but it is true for BST as well as
for HST.

We pointed out already that an ultrapower of~$V$ is non-well-founded,
and hence in ZFC there is no mapping of the ultrapower of~$V$ onto
some transitive class that would convert the membership relation
$\astin$ in the ultrapower into a true~$\in$-relation. Briefly, the
membership relation in the ultrapower is not the actual membership
relation~$\in$, and cannot be converted to it in the framework of
ZFC.\, Since nonstandard set theories axiomatize ultrapowers or
iterated ultrapowers, they transcend ZFC; an intuitive picture of the
universe of such a theory properly extends the familiar ZFC
universe~$V$.

A question then arises to determine what the place of~$V$ is in this
larger picture.  In a discussion of this issue, we have to distinguish
between two meanings of the word \emph{standard}.  On one hand, there
is the technical meaning; standard objects are the objects in the
scope of the predicate~$\mathbf{st}$.  On the other hand, the
expression \emph{standard objects} is sometimes used to refer to the
objects that cousin Georg and many traditional mathematicians are
familiar with as the objects from the ZFC universe~$V$.  These two
meanings do not necessarily coincide; in order to perform a
disambiguation, we will refer to the \emph{standard objects} in this
second sense as the \emph{familiar objects}.

\subsection{The three pictures}
\label{s55}

At this point, there are several choices.  One can identify the
\emph{internal} numbers and sets with the familiar numbers and sets.
Standard reals and sets (i.e., those in the scope of~$\mathbf{st}$)
are on this view \emph{only some} of the mathematical entities with
which we are familiar, singled out for special attention. One can call
this view the \emph{internal picture}. It is the view adopted in
Nelson \cite{Ne77}.  We discuss it in detail in Section~\ref{s72}.

However, one can equally well regard the \emph{standard} numbers and
sets as the familiar numbers and sets, with the understanding that
standard sets may contain also (what are referred today as)
nonstandard elements, or (as Leibniz might have called them)
\emph{ideal elements} or \emph{useful fictions}; see
Section~\ref{s85}.  One can call this view the \emph{standard
picture}.  It is proposed in Hrbacek\;\cite{Hr79}.  In the
superstructure framework, it would correspond to viewing the
\emph{standard copies} as the familiar objects.  Analogously, in
projective geometry an affine plane is outfitted with an assortment of
ideal points at infinity; see Section~\ref{s31}.

External set theories admit yet another picture, the \emph{external
picture}.  In these theories one can single out the class of
well-founded sets.%
\footnote{A set~$A$ is \emph{transitive} if any~$x \in A$
satisfies~$x\subseteq A$.  A set~$X$ is \emph{well-founded} if there
is a transitive set $A$ such that~$X \subseteq A$ and the restriction
of the $\in$-relation to~$A$ is well-founded. In ZFC all sets are
well-founded, but external nonstandard set theories necessarily have
also ill-founded sets.}
These are generally external.  There is a one-one~$\in$-preserving
mapping~$\ast$ of the well-founded sets onto the standard sets.  One
then has the option of regarding the well-founded sets as the
intuitively familiar objects, and everything else as \emph{ideal}. In
this view, mathematics can be developed in a way similar to the
superstructure framework, with the universe of all well-founded sets
in place of~$V_{\omega}(\R)$.  This picture was outlined in an
appendix to Hrbacek\;\cite{Hr79} and is fully implemented in
Kanovei--Reeken \cite{KR}.

\subsection{The Protozoa metaphor}

The following analogy may be helpful in interpreting the three
pictures of Section~\ref{s55}.  Here we are thinking of each real
number as an individual (as one does in the superstructure approach).
Let us assume we are familiar with the class of animals, and then
someone invents the microscope and we discover the protozoa; are we to
count them as animals?  There are three possible answers.
\begin{enumerate}
\item
(External picture) Protozoa are not animals because they fall outside
what we previously meant by the word `animal', but we can invent a new
word, `hyperanimal,' to include animals and protozoa.
\item
(Standard picture) Protozoa are animals and were so all along; there
is no change in the meaning of `animal'.  The familiar class of
animals contains unfamiliar species like protozoa in addition to the
familiar ones.  The microscope allows us to see new animals (animals
that are new to us, that is).
\item
(Internal picture) Protozoa are animals, and they were familiar all
along, even though we weren't aware of them specifically.  What is new
is our ability to distinguish between microscopic and macroscopic
animals: this is what the microscope provides.%
\footnote{The allusion to Keisler's microscope Keisler \cite{Ke86} is
of course intentional.}
\end{enumerate}

\subsection{The intuitive interpretation}
 
The following three points need to be kept in mind.

1. The choice of the nonstandard set theory does not commit one to a
particular \emph{picture}.  Nelson used the internal picture, and
hence this picture is usually considered an intrinsic part of IST, but
IST is equally compatible with the standard picture. The theory and
the picture are two separate things. The choice of the picture is not
a mathematical issue, since one will get exactly the same mathematical
results regardless of the picture, but rather a matter of personal
preference.

2.  Cousin Georg would likely object to both the internal and the
standard picture, on the grounds that numbers and sets as developed
around 1872 not long before the 1886 Patent-Motorwagen (see
Section~\ref{wings}), i.e., the entities he feels comfortable with, do
not fit either in his opinion.

In the case of the internal picture the issue is that, intuitively,
every nonempty collection of natural numbers has a least element.  Now
in a nonstandard set theory like IST there is no \emph{set} of all
nonstandard natural numbers.  This may run counter to cousin Georg's
expectations concerning the properties of the \emph{familiar} numbers
along the lines of the CD+II mindset (see Section~\ref{s11}), where
every definite property of natural numbers should necessarily
determine a set.  Note that~$\mathbf{st}$ is treated as a definite
property in nonstandard set theory: for each~$x$,
either~$\mathbf{st}(x)$ or~$\neg \mathbf{st}(x)$.

In \emph{intuitive} terms, we can consider~$\st$ to be an indefinite,
vague property like \emph{heap} in the \emph{paradox of sorites}.
This indefiniteness is reflected in the formal theory by the
nonexistence of the corresponding set.  The resistance to this idea is
due to an inherent tendency to abstract (form collections).

Cousin Georg's objection to the standard picture is that the
\emph{familiar} set~$\mathbb{N}$ just does not contain any ideal
elements.  A counter-argument is that one can think \emph{as if} it
did.  The problems connected with collections of \emph{ideal} natural
numbers without the least element are also perhaps less pressing in
this picture.  (A personal anecdote: The recent book
Hrbacek--Lessman--O'Donovan \cite{HLO} was first written in the
internal picture, which was strongly objected to by some of the
referees--perhaps cousin Georg among them.  Eventually a switch to the
standard picture was implemented, and the book was accepted
immediately afterward.)

3.  The external picture seems to be the least objectionable
from the point of view of cousin Georg (in fact, there seems to be no
obvious reason why it should be objectionable at all), but the
necessity to work with external sets (in addition to the standard and
internal ones) complicates the framework.

A discussion of these various pictures can be found in Hrbacek
\cite{Hr06}.

\subsection{Relative standardness}
\label{s57}

Arguably, there is not much current mathematical work in nonstandard
analysis that actually uses more than one level of standardness. In
the superstructure framework, we know only of an early paper Molchanov
\cite{Mo89}, employing two levels.  Perhaps one reason for this
paucity is that the model-theoretic framework with more than one level
of standardness quickly becomes unmanageable because it naturally
involves complex combinations of ultrapowers to take care of all the
details.

As for the axiomatic framework, there are two distinct approaches. 

The first approach is to define different levels or degrees of
standardness within a given nonstandard universe of discourse, e.g., a
universe satisfying Nelson's internal set theory IST or its BST
(bounded set theory) version; see Section~\ref{s51b} and
Section~\ref{s52b}.

There are several meaningful approaches to defining relative
standardness in this setting.  Among them are the following three:
\begin{enumerate}
\item
(most natural but not most useful) A set~$y$ is standard with respect
to~$x$ if there is a standard map~$f$ such that~$x\in dom\, f$ and
$y=f(x)$; see e.g., Gordon \cite{Go89}.
\item
(following \cite{Go89}) A set~$y$ is standard with respect to~$x$ if
there is a standard map~$f$ such that~$x\in dom\, f$, all values
of~$f$ are finite sets%
\footnote{Finiteness in IST corresponds to hyperfiniteness in the
model-theoretic approach.}
and ~$y \in f(x)$. 
\item
(following Kanovei \cite[Section 3]{Ka91}, based on earlier ideas of
Luxemburg\;\cite{Lu62} and Hrbacek).  If~$\kappa$ is a standard
cardinal then one defines \emph{sets\;of\;order}~$\kappa$ as those
belonging to standard sets of cardinality~$\kappa$ or less.
\end{enumerate}
These and similar definitions lead to the \emph{internal subuniverses}
as discussed in Kanovei--Reeken \cite{KR}, that is, classes~$I$ of
internal sets satisfying the following: if~$x_1,\ldots,x_n\in I$,
where~$n$ is a standard number,~$f$ is a standard function, and the
string~$\langle x_1,\ldots,x_n \rangle$ belongs to~$dom\, f$
then~$f(x_1,\ldots,x_n)\in I$.  Each such an internal subuniverse~$I$
can be considered as a new degree of standardness, that is, the class
of all sets \emph{standard in the new sense}.

Chapter 6 in \cite{KR} is devoted to these notions and contains
several results related to properties of these generalized notions of
standardness.

The second approach to relative standardness is to introduce this
complex notion axiomatically, rather than explicitly defining it in a
given nonstandard universe.  The advantage of this approach is that
all the axioms of BST can be relativized to each level of
standardness.

The idea to treat levels of standardness axiomatically was proposed by
Wallet (see P\'eraire and Wallet \cite{PW}) and fully developed by
P\'eraire \cite{Pe92}.  P\' eraire's axiomatic theory RIST is an
extension of IST to many levels of standardness. Besides~$\in$, its
language contains a binary \emph{relative standardness}
predicate~$\sqsubseteq$; one can read~$x \sqsubseteq y$ as ``$x$ is
standard relative to~$y$.''  Thus the class~$\{ x\colon x \sqsubseteq
y \}$ is the \emph{level of standardness} determined by~$y$. The
number 0 (or any other object definable without parameters) determines
the coarsest level of standardness, which can be identified with the
standard sets of IST.

In RIST one can have a finite sequence of numbers, for example
$\eta_0$,~$\eta_1$,~$\eta_2$, where~$\eta_0$ is infinitesimal and each
of the other terms is infinitesimal relative to the level of
standardness determined by the preceding one.  Tao writes:
\begin{quote}
Having this hierarchy of infinitesimals, each one of which is
guaranteed to be infinitesimally small compared to \emph{any} quantity
formed from the preceding ones, is quite useful: it lets one avoid
having to explicitly write a lot of epsilon-management phrases such as
``Let~$\eta_2$ be a small number (depending on~$\eta_0$ and~$\eta_1$)
to be chosen later'' and ``\ldots assuming~$\eta_2$ was chosen
sufficiently small depending on~$\eta_0$ and~$\eta_1$'', which are
very frequent in hard analysis literature, particularly for complex
arguments which involve more than one very small or very large
quantity.  Tao \cite[p.\;55]{Ta08}
\end{quote}

While it is straightforward to iterate levels of standardness
countably many times, or even along any a priori given linear
ordering, it is harder to produce satisfactory frameworks that have
sequences of levels of standardness of arbitrary hyperfinite length.
This is done in Hrbacek \cite{Hr04} and \cite{Hr09}.  In such a
framework (the theory GRIST) one can have a hyperfinite sequence of
natural numbers where each term is nonstandard relative to the
previous one.  Tao speculated that this feature might be useful in the
proof of Szemer\' edi's theorem, but the only use of it thus far seems
to be a characterization of higher-order differentiability in Hrbacek
\cite{Hr07}.  See also \cite{TV} for a use of the language of
nonstandard analysis in order to avoid a large number of iterative
arguments to manage a large hierarchy of parameters.

The axiomatic framework is used in several papers of P\'eraire (see
e.g., \cite{Pe93}), as well as Gordon \cite{Go97}, the more recent
work of Di Nasso \cite{Di15}, and in \cite{HLO} (these works use
infinitely many levels of standardness).%
\footnote{In the simplest case one has a nested chain of classes
$\N^{\st_0} \subseteq \N^{\st_1} \subseteq \N^{\st_2} \subseteq \cdots
\subseteq \N$, where~$\N^{\st_0} = \N^{\st}$ can be identified with
the standard natural numbers and~$\N$ is the set of all natural
numbers.  Thus the higher the~$n$, the less standard the integers
of~$\N^{\st_n}$ are.  In fact the ordering of the levels of
standardness does not have to be of type~$\omega$.  It can be more
complicated, have infinite descending chains, even be dense, but it
has to have a least element~$\mathbb{N}^{\st_0}$.}
The axiomatic framework hugely simplifies the presentation.

While RIST and GRIST are internal theories, there are also external
theories with many levels such as SNST by Fletcher \cite{Fl89} and EST
by Ballard \cite{Bal94}.

\subsection{Constructive aspects of Robinson's framework}
\label{s5}

The discussion in some of the earlier sections may have given an
impression that nonstandard analysis depends on ultraproducts or some
similar model-theoretic techniques, and therefore is essentially
non-effective.  Errett Bishop and Alain Connes%
\footnote{\label{f29}See the references in note\;\ref{f5}.}
have both claimed that nonstandard analysis is somehow fundamentally
non-constructive and non-effective. These claims are fundamentally
flawed and have been debunked as follows.

\subsubsection{Abstract analysis}

Methods of nonstandard analysis are often applied in areas that are
inherently non-constructive, such as general topology, the theory of
Banach spaces, or Loeb measures; we refer to them here as
\emph{abstract analysis}. The nonstandard settings that one needs for
abstract analysis are of course also non-constructive, and do imply
the existence of ultrafilters. For example, in IST$^{-}$, a theory
obtained from IST by deleting the Axiom of Choice, one can prove the
Boolean Prime Ideal Theorem (Every filter can be extended to an
ultrafilter).%
\footnote{It may be interesting to note that one cannot prove the full
Axiom of Choice; so even the nonstandard abstract analysis, to the
extent it is supported by IST$^{-}$, is actually more ``constructive''
than the standard abstract analysis carried out in ZFC.\; See Hrbacek
\cite{Hr12} and Albeverio et al.\;\cite[p.\;31]{Al86} for a related
discussion.}
However, the approach not involving nonstandard analysis (if it
exists) is \emph{equally non-constructive}.  A detailed example of
this phenomenon in the work of Connes may be found in \cite{13c}.

\subsubsection{Nonstandard Peano Arithmetic theories}

Classical mathematical results generally do not require the full power
of set theory, whether standard or nonstandard.  Let PA be (the
axiomatic theory of) the Peano arithmetic, which naturally axiomatizes
the natural numbers~$\N$, and, for each~$n$, let PA$_n$ be the
\emph{$n$-th Peano arithmetic}, a theory which naturally axiomatizes
the structure which contains the set of natural numbers~$\N$ along
with its consecutive power sets~$\PP(\N)$,~$\PP_2(\N)=\PP(\PP(\N))$,
\ldots,~$\PP_{n-1}(\N)$, so that PA itself is PA$_1$.

The ideas similar to those described for ZFC, can also be applied to
define a nonstandard version~${}^\ast\hspace{-1pt}\text{PA}_n$ of each
PA$_n$.  The relations among these theories (between standard and
nonstandard versions for the identical and different values of~$n$)
were thoroughfully studied in Henson--Keisler \cite{HK86} and some
related papers.  In particular, it was established that each
${}^\ast\hspace{-1pt}\text{PA}_n$ is comparable in terms of its
strength rather with the standard theory PA$_{n+1}$ than with its
direct standard base PA$_n$.

\subsubsection{Constructive nonstandard mathematics}

There is extensive work by Palmgren, Avigad \cite{Av05}, Martin-L\"of
\cite{Ma90b}, van den Berg et al.\;\cite{Va12}, on \emph{constructive}
nonstandard mathematics; see Palmgren \cite{Pa16} for a bibliography
of the early contributions.  These references introduce both syntactic
and semantic approaches to nonstandard analysis which are constructive
in the sense of Bishop's \emph{constructive analysis} and
Martin-L\"of's \emph{constructive\;type theory}, i.e., based on
intuitionistic logic.

\subsubsection{Effective content and reverse mathematics}

\emph{Classical} nonstandard analysis actually contains a lot of
\emph{easily accessible} effective content as follows.  Sanders
\cite{Sa15} establishes that one can algorithmically convert a proof
of a theorem in ``pure'' nonstandard analysis (i.e., formulated solely
with nonstandard axioms and nonstandard definitions of continuity,
compactness, differentiability, etc.) into a proof of the
constructive/effective version of the associated classical theorem.

This work is done in an axiomatic framework (some fragment of full
nonstandard set theory) and always produces effective (and even
constructive) results when \emph{Transfer} and \emph{Standardization}
are not used.

The use of the former gives rise to \emph{relative computability
results} in the spirit of \emph{Reverse Mathematics}, while the use of
the latter (or saturation) translates into results computable modulo
\emph{bar recursion} \cite{Ko08}.  Osswald and Sanders discuss the
constructive content of nonstandard analysis at length in \cite{OS},
and Sanders \cite{Sa17} discusses how these results undermine the
Bishop--Connes critique.

\subsubsection{Tennenbaum's theorem}

At first glance, even fragments of Ro\-binson's framework based on
\emph{arithmetic} may seem fundamentally non-constructive from the
viewpoint of Tennenbaum's theorem.  The theorem \emph{literally}
states that any nonstandard model of Peano Arithmetic is not
computable.  What this means is that for a nonstandard
model~$\mathcal{M}$ of Peano Arithmetic, the
operations~$+_\mathcal{M}$ and~$\times_\mathcal{M}$ cannot be
computably defined in terms of the operations~$+_\mathbb{N}$
and~$\times_\mathbb{N}$ of an intended model~$\mathbb{N}$ of Peano
Arithmetic.  Similar results exist for fragments; see Kaye
\cite[\S\,11.8]{Kay91}.

Now, while certain nonstandard models do require non-constructive
tools to build, \emph{models} are not part of Nelson's axiomatic
approach IST or its variant BST (see Section~\ref{s52b}).
Furthermore, IST and BST specifically disallow the formation of
external sets like ``the operation~$+$ restricted to the standard
numbers.''  Nelson called attention to this rule on the first page of
\cite{Ne77} introducing IST: ``We may not use external predicates to
define subsets.  We call the violation of this rule \emph{illegal set
formation}''  (emphasis in original).

Thus, one of the fundamental components of Tennenbaum's theorem,
namely the external set ``$+$ restricted to the standard naturals'' is
missing from the internal set theories IST and BST, as the latter
exclusively deal with internal sets.  Arguably, therefore,
Tennenbaum's theorem is merely an artifice of the model-theoretic
approach to nonstandard analysis.

The critique by Connes of Robinson's framework is based on similarly
flawed assumptions, namely that the models generally used in
Robinson's framework are fundamentally non-constructive and therefore
so is nonstandard analysis.  It is worth pondering the fact that
non-constructive mathematics is routinely used in physics (see e.g.,
the discussions of the Hawking--Penrose singularity theorem and the
Calabi--Yau manifolds in\;\cite{11a}, undecidability of the spectral
gap \cite{CPW}), without scholars jumping to the conclusion that
physical reality is somehow non-constructive.

\section{Physics: Radically elementary modeling}
\label{s7b}

\subsection{Tao on intricate results}

Tao wrote in \emph{Compactness and contradiction} as follows:
\begin{quote}
The non-standard proofs require a fair amount of general machinery to
set up, but conversely, once all the machinery is up and running, the
proofs become slightly shorter, and can exploit tools from (standard)
infinitary analysis, such as orthogonal projections in Hilbert spaces,
or the continuous-pure point decomposition of measures.  Tao
\cite[p.\;168]{Ta13}
\end{quote}
%
%
Here Tao is referring to the fact that \emph{if} one works from the
\emph{construction perspective} (as he does), then the entire
ultrapower construction counts as part of the general machinery.
Meanwhile,
\begin{quote}
\ldots for particularly intricate and deep results it can happen that
non-standard proofs end up being simpler overall than their standard
analogues, particularly if the non-standard proof is able to tap the
power of some existing mature body of infinitary mathematics (e.g.,
ergodic theory, measure theory, Hilbert space theory, or topological
group theory) which is difficult to directly access in the standard
formulation of the argument. \cite[p.\;169]{Ta13}
\end{quote}
Edward Nelson would have likely subscribed to Tao's view as expressed
above, but may have added that there is another side of the coin which
is what we shall refer to as \emph{Radically Elementary Modeling}.
This term alludes to Nelson's book \emph{Radically Elementary
Probability Theory} \cite{Ne87}.

\subsection{The physicist's vibrating string}
\label{s71s}

We are interested in developing a mathematical model of a vibrating
string.  First we will illusrate the viewpoint of a physicist by
providing some quotations.  In the celebrated \emph{Berkeley Physics
Course} (vol.\;3), Frank Crawford writes:

\begin{quote}
(Sec.\;2.1) If a system contains a very large number of moving parts,
and if these parts are distributed within a limited region of space,
the average distance between neighboring moving parts become very
small. As an approximation, one may wish to think of the number of
parts as becoming infinite and the distance between neighboring parts
as going to zero. One then says that the system behaves as if it were
``continuous''.  Implicit in this point of view is the assumption that
the motion of near neighbors is nearly the same.  Crawford
\cite[p.\;48]{Cr66}
\end{quote}
The next quotation deals with a uniform beaded string having~$N$ beads
and with fixed ends:
\begin{quote}
(Sec.\;2.2) We now consider the case where~$N$ is huge, say
$N=1,000,000$ or so.  Then for the lowest modes (say the first few
thousand), there are very large number of beads between each node.
Thus the displacement varies slowly from one bead to the next [We
shall not consider here the highest modes, since they approach the
``zigzag limit", where a description using a continuous function is
not possible.] (ibid., p.\;51)
\end{quote}
Furthermore,
\begin{quote}
(Sec.\;2.4) In sec.\;2.2 we considered a continuous string\ldots{} In
this section we will find the exact solutions for the modes of a
uniform beaded string having~$N$ beads and with fixed ends.  In the
limit that we take the numbers of beads~$N$ to to be infinite (and
maintain the finite length~$L$), we shall find the standing waves that
we studied in Sec.\;2.2.  Our purpose is not merely that, however.
Rather, we shall find that, in going to the limit of a continuous
string, we discarded some extremely interesting behavior of the
system. (ibid., p.\;72)
\end{quote}
These comments would be considered by traditionally trained
mathematicians as an ``informal discourse of a physicist.''  But for a
mathematician trained in nonstandard analysis, they can be easily
translated into a perfectly formalized mathematical text.

\subsection{The hyperfinite vibrating string}

We will now formalize Crawford's approach in Nelson's IST.
Let~$\Sigma_N$ be the following system of differential equations:
\begin{equation}
\label{NS}
\Sigma_N
\begin{cases}
\displaystyle \tfrac{dx_j}{dt} = y_j(t),
\quad\quad\quad\quad\quad\quad\quad\quad\quad\quad\quad\quad\; j =
1,\ldots, N-1\\ \tfrac{dy_j}{dt} = \tfrac{K^2}{h^2}(x_{j-1}(t) - 2
x_j(t) +x_{j+1}(t)),\; j = 1,\ldots, N-1\\ x_0(t) = x_N(t) = 0
\end{cases}
\end{equation}
where~$N \cdot h = 1$.

Assuming~$K = 1$, from elementary calculus we know that the solutions
are given by the following equations:

\begin{equation}\label{solution}
x_j(t) = \Sigma_{n = 1}^{N-1}u_n\cos(\omega_nt)+
\frac{v_n}{\omega_n}\sin(n\pi j h),
\end{equation}
where
\begin{equation}\label{const}
\begin{array}{lcl}
\omega_n &= & \displaystyle \frac{2}{h}\sin\Big(\frac{n\pi h}{2}\Big)
\\ [8pt] u_n& =& \displaystyle 2h \Sigma_{k = 1}^{N-1}x_n(0)\sin(k\pi
n h)\\[8pt] v_n &=& \displaystyle 2h \Sigma_{k =
1}^{N-1}y_n(0)\sin(k\pi n h)
\end{array}
\end{equation}

In classical mathematics one considers the continuous wave equation: 
\begin{equation}
\label{cont}
\Sigma \quad \quad \left\{
\begin{array}{lcl}
\displaystyle \frac{\partial ^2X}{\partial t^2}(z,t) &= &
\displaystyle \frac{\partial^2 X}{\partial z^2}(z,t)\\ [8pt] X(0,t) &=
&X(1,t) = 0\\
\end{array}
\right.
\end{equation}
to be ``the model'' and proves the existence of solutions~$(t,z)
\mapsto X(t,z)$, continuous with respect to~$t$ and~$z$ wich satisfy
(\ref{cont}); if one is interested in computer simulations it can be
proved that suitable solutions of the discrete (\ref{NS}) converge in
some sense to a solution of \eqref{cont} when~$N$ tends to infinity.
By the way, in classical mathematics, the system (\ref{NS}) is usually
viewed as \emph{an approximation of} (\ref{cont}).

But in nonstandard analysis we can consider (\ref{NS}) to be ``the
model'' provided that~$N$ is nonstandard (infinite).  Explicit
formulas (\ref{solution}) still stand.  Typically a physicist is less
interested in the \emph{existence} of solutions than in their
\emph{properties}.  For instance he might be interested in the
``shape" of the string at time~$t$, namely, in the aspect of the
``dotted line" in~$[0,1] \times \mathbb{R}$:
\begin{equation}
\label{dot}
\big\{ (n \,dz , u_n(t)) \colon n = 0,1, \ldots, N\big\}
\end{equation}
with~$dz = h$.  An important question is whether the ``dotted line''
{\em seems continuous}.  (The physicist asks ``whether the
displacement varies slowly from one bead to the next'').  The
mathematical formalization is the following.

\begin{definition}
The ``dotted line'' (\ref{dot}) is said to be \emph{S-continuous} if:
\[
k\;dz \approx 0 \Longrightarrow u_{n+ k}(t) \approx u_n(t).
\]
\end{definition}
The S-continuity of the initial condition~$x_n(0)$;~$n= 0, 1,\cdot
\cdot \cdot, N$ is insufficient to imply the S-continuity of the
solution.  What is required in addition is that the {\em initial
energy}
\[
E(0) = \frac{1}{2} \sum_{k = 1}^{N-1}
\left(\frac{x_{n+1}(0)-x_n(0)}{h}\right)^2h +\sum_{k = 1}^{N-1}y_n(0)
\]
be {\em limited}. In fact consider an initial condition such that
$x_n(0) = 0$; if an unlimited energy at time~$t=0$ is contained in the
unlimited modes~$\lambda_n$ it turns out that after some duration all
this energy will concentrate at some point and ``break'' the solution;
for details see Delfini--Lobry \cite{DL}.  It is one of the
interesting aspects of the behavior of the system that we can observe
when we do not discard high modes.  Notice that in the preceding lines
one uses only the \emph{Idealization axiom} of IST.

If one wishes to make the connection with the continuous model, one
uses the \emph{shadow} of the ``dotted line'' whose definition uses
the full strength of the \emph{Standardization} axiom.  Then it can be
proved under suitable asumptions on the initial conditions (including
limited energy) that the {\em shadow} of the ``dotted line'' is the
graph of a differentiable function~$\varphi(t,z)$ which is a solution
of (\ref{cont}).

We acknowledge that our example is very simple and that physicists did
not await Robinson to understand all the information contained in
formulas (\ref{solution}) and (\ref{const}).

\subsection{Hyperfinite Brownian motion}

This point of view was advocated in Nelson \cite{Ne87} (a less
elementary approach using Loeb measures was pioneered in Anderson
\cite{An76}), where Nelson defined the Brownian motion as a discrete
random walk of infinitesimal step size at times~$0, dt, 2dt,\ldots,
kdt,\ldots,Ndt = 1$, with~$dt$ infinitesimal. Namely, we set
~$$\xi_0
= 0;\quad\xi_{t+dt} = \xi_t + z_t \sqrt{dt}$$ where~$z_t$ is a
sequence of independent random variables taking the values~$+1$
or~$-1$ with probability~$\frac{1}{2}$.  The probability space for
such a process is just the (hyper)finite set~$\Omega =\{-1,+1\}^N$
with its~$\sigma$-algebra equal to~$ P(\Omega)$. A trajectory of this
Brownian motion is defined as the mapping~$t \mapsto x_t$ with domain
in what Nelson refers to as the near-interval, namely the hyperfinite
set~$\{0,dt,2dt,\ldots,kdt,\ldots,Ndt = 1\}$.  The important point
here is that despite its discrete definition, which fits well with the
intuition that ``at each instant one chooses at random'' (notice that
in the limit~$dt \rightarrow 0$ this concrete meaning is lost), the
notion of continuity is perfectly defined by {\em S-continuity} (see
above).  Then Nelson proves the following:
\begin{quote} 
\emph{Almost surely}, a trajectory of the Brownian motion is
S-continuous.
\end{quote} 
Here {\em almost surely} means that for every standard~$\varepsilon >
0$ the (external) set of trajectories that are not S-continuous is
contained in a (true) set of probability less than~$\varepsilon$. This
result is obtained within a very light subsystem of IST (just
Idealization is needed). The title of \emph{Radically Elementary
Probability Theory} delivers on its promise.  In fewer than 80 pages,
starting from the early beginnings including nonstandard analysis and
some basics of probability theory, the mathematical model for the
physical Brownian motion is constructed.  Moreover the last chapter
called \emph{The de
Moivre--Laplace--Lindeberg--Feller--Wiener--L\'evy--Doob--Erdos--Kac--Donsker--Prokhorov
theorem} provides a large amount of modern results on stochastic
processes.  It is the decision not to replace clear nonstandard
statements by wordier conventional paraphrases that makes things
elementary.  Let us quote Nelson's Preface, paragraphs 2 and 3:
\begin{quote}
This work is an attempt to lay new foundations for probability theory,
using a {\em tiny bit} of nonstandard analysis. The mathematical
background required is little more than which is taught in high
school, and it is my hope that it will make deep results from the
modern theory of stochastic processes readily available to anyone who
can add, multiply and reason.\\ What makes this possible {\em is the
decision} to leave the results in non-standard form. Nonstandard
analysts have a {\em new way of thinking} about mathematics, and if it
is not translated back into conventional terms then {\em it is seen to
be remarkably elementary.}  Nelson \cite[p.\;vii]{Ne87} (emphasis
ours)
\end{quote} 
At the next stage, in order to connect elementary results to classical
ones, the full IST system is used to prove the (formal) mathematical
equivalence of the hyperfinite model with the classical Wiener
process. Let us quote again from Nelson:
\begin{quote}
The purpose of this appendix is to demonstrate that theorems of the
conventional theory of stochastic processes can be derived from their
elementary analogues by arguments of the type usually described as
generalized nonsense; there is no probabilistic reasoning in this
appendix. This shows that the elementary nonstandard theory of
stochastic processes can be used to derive conventional
results\ldots{} \cite[p.\;80]{Ne87}
\end{quote}
Nelson goes on to make the following additional point:
\begin{quote}
\ldots{} on the other hand, it shows that neither the elaborate
machinery of the conventional theory nor the devices from the full
theory of nonstandard analysis, needed to prove the equivalence of the
elementary results with their conventional forms, add anything of
significance: the elementary theory has the same scientific content as
the conventional theory.  This is intended as a self destructing
appendix.%
\footnote{Nelson's cryptic comment calls for a clarification.  What
Nelson is apparently referring to is the fact that the conventional
formalism for random walks in particular and stochastic analysis in
general, as developed by Kolmogorov and others, relied on elaborate
machinery based on measure theory.  Nelson viewed the replacement of
elaborate machinery by radically elementary considerations as a
positive scientific development, which however has self-destructing
aspects as far as the work of the traditional practitioners themselves
is concerned.}
Nelson \cite{Ne87}
\end{quote}
This view regarding stochastic processes was extended to diffusion:
\[
\xi_0 = 0;\quad\xi_{t+dt} = \xi_t + f(\xi_t)dt+ z_t \sqrt{dt}
\]
in Beno\^\i t \cite{Be95}, to stochastic analysis as applied to
finance in van den Berg \cite{Va00}, and to complex physical particle
systems in Weisshaupt \cite{We09}, \cite{We11}.  In a similar vein,
Lobry \cite{LOB92}, Lobry--Sari \cite{LoSa04} advocate viewing ODEs
with discontinuous right hand side on the basis of a simple Euler
scheme with infinitesimal step.  The path of \emph{Radically
Elementary Modeling} was also pursued in Diener--Lobry \cite{DL85},
Harthong\;\cite{Ha84}, Fliess \cite{Fl06} and other works.

\section{More on Nelson}
\label{s6}

\epigraph{Bien que tous les math\'ematiciens ne le reconnaissent pas,
il existe une ``r\'ealit\'e math\'ematique archa\"\i que''.  Comme la
r\'ealit\'e du monde ext\'erieur, celle-ci est a priori non
organis\'ee, mais r\'esiste \`a l'exploration et r\'ev\`ele une
coh\'erence.  Non mat\'erielle, elle se situe hors de l'espace-temps.
A.\;Connes\;\cite{Co00d}}

\epigraph{The idea that there is a pure world of mathematical objects
(and perhaps other ideal objects) totally divorced from our
experience, which somehow exists by itself is obviously inherent
nonsense.  M.\;Atiyah \cite[p.\;38]{At}}

We first give a more detailed discussion of the II hypothesis outlined
in Section~\ref{s11}.

\subsection{Kleene, Wang, Putnam, and others on the II} 
\label{s23}

Many set theorists think of the semantics of set theory as given in
terms of an \emph{intended model} or \emph{intended interpretation},
in accordance with a realist philosophy of mathematics; for details
see Maddy \cite{Ma90}.  Intended interpretations concern not merely
the natural numbers but, more generally, purported entities in
something described as ``a mathematical practitioner's universe,''
``primordial mathematical reality'' (a term coined by A.\;Connes), or
the like.  Concerning such II hypotheses, Kleene wrote:
\begin{quote}
Since a formal system (usually) results in formalizing portions of
existing informal or semiformal mathematics, its symbols, formulas,
etc.  will have meaning or interpretations in terms of that informal
or semiformal mathematics.  These meanings together we call the
(\emph{intended} or \emph{usual} or \emph{standard})
\emph{interpretation} or \emph{interpretations} of the formal system.
Kleene \cite[p.\;200]{Kl67}
\end{quote}
In a similar vein, Hao Wang writes:
\begin{quote}
The originally intended, or standard, interpretation takes the
ordinary nonnegative integers~$\{0, 1, 2, \ldots\}$ as the domain, the
symbols~$0$ and~$1$ as denoting zero and one, and the symbols~$+$ and
$\cdot$ as standing for ordinary addition and multiplication (see
section ``Truth definition of the given language'' in Wang
\cite{Wa16}).
\end{quote}

An \emph{intended model} could be defined as one that ``reflects our
\emph{intuitions} [about natural numbers] adequately''
Quinon--Zdanowski \cite[p.\;313]{QZ} (emphasis added).  Haim Gaifman
writes:
\begin{quote}
Intended interpretations are closely related to realistic conceptions
of mathematical theories.  By subscribing to the standard model of
natural numbers, we are committing ourselves to the objective truth or
falsity of number-theoretic statements.  \ldots{} Realism and intended
interpretations are thus intimately related; often they are treated as
the same problem.  Gaifman \cite[p.\;15]{Ga04}
\end{quote}
Thus the II hypothesis entails that~$\N$ obtains a detailed reference
in the ordinary counting numbers.  Meanwhile, Robinson wrote:
``mathematical theories which, allegedly, deal with infinite
totalities do not have any detailed \ldots{} reference.''%
\footnote{A related point was made by Salanskis; see note~\ref{f37}.}
\cite[p.\;42]{Ro75}


Weber claimed to quote%
\footnote{The adage Weber reports in Kronecker's name is known not to
appear in any of Kronecker's writings; see Ewald \cite[p.\;942,
note\;{a}]{Ew96}.}
Leopold Kronecker as positing an allegedly immutable status of the
integers ``whereas everything else is the work of man.''  Weber's
quote is often misattributed to Kronecker himself.  However, Kronecker
specifically wrote, on the contrary, that numbers were a creation of
the human mind, and contrasted numbers that are human artefacts, with
space and time which he felt were outside the mind (possibly following
the traditional pre-Einsteinian philosophers):
\begin{quote}
The principal difference between geometry and mechanics on one hand,
and the other mathematical disciplines we comprehend under the name of
\emph{arithmetic}, consists according to Gauss in this: the object of
the latter, number, is a pure product of our mind, while space as well
as time has reality also outside of our mind which we cannot fully
prescribe a priori.  Kronecker \cite[p.\;339]{Kr87}
\end{quote}
See also Gauthier \cite[p.\,163]{Ga15}.  Hilary Putnam doubted our
ability to \emph{fix} an intended interpretation and seemed to treat
the~II as a throwback to Kantian \emph{noumena} \cite[p.\;482]{Pu80}.%
\footnote{Putnam's comment there about ``noumenal waifs'' indicates an
impatience with the typical post-Dedekind mathematician's assumption
that~$\N$ finds a detailed reference in the ordinary intuitive
counting numbers.  Putnam seems to view the latter as inaccessible
noumena, the mathematician's identification of~$\N$ with these noumena
as an unwarranted assumption, and the search for \emph{the} intended
interpretation as a futile search for parenthood for the said waiflike
noumena.  See further in note~\ref{f37}.}
For responses to Putnam see e.g., Horsten \cite{Ho01}, Gaifman
\cite{Ga04}.

\subsection{Predicate on the familiar real line}
\label{s72}

In the internal view of IST, rather than thinking of the standard sets
as being the familiar ones, embedded in a larger nonstandard world,
one essentially thinks of the nonstandard universe as the familiar
world (in the terminology of Section~\ref{s52}), with standard
structures being picked out of it by means of a suitable predicate.

On this view, one has the real numbers including both infinite and
infinitesimal reals, and one can say when two finite real numbers have
the same standard part, etc.  In this picture, we think of the
\emph{familiar} real line as what in the other picture would be the
nonstandard one, and then we have a predicate on that, which
corresponds to the range of the star map in the other approach.  So
some real numbers are standard, and some functions are standard and so
on.

Hamkins wrote: ``One sometimes sees this kind of perspective used in
arguments of finite combinatorics, where one casually considers the
case of an infinite integer or an infinitesimal rational.''  That kind
of talk may seem alien to cousin Georg (see Section~\ref{wings}), but
for those who adopt the picture it is useful.  In a sense, one goes
the whole nine yards into the nonstandard realm until it turns
\emph{familiar}.

There are several characterisations of Nelson's framework.  Each
characterisation employs a slightly different philosophical starting
point.  Peter Loeb's position as expressed in \cite[p.\;vii]{LW} is
that (1) Nelson is working exclusively in the nonstandard universe,
and (2)~there is no standard world in this setting.  This second point
is of course mathematically correct to the extent that the predicate
\st{} violates the axiom of separation.  As far as the first point is
concerned, it involves a bit of an equivocation on the meaning of the
word \emph{standard}:
\begin{enumerate}
\item[(a)] its technical meaning in the context of Robinson/Nelson,
and
\item[(b)] the meaning of \emph{ordinary/usual}.
\end{enumerate}
The equivocation is disambiguated in Section\;\ref{s52}.  In line with
the CD+II mindset (see Section\;\ref{s11}), cousin Georg (see
Section~\ref{wings}) would bridle at the idea that the ordinary/usual
real line should contain infinitesimals in any sense, and so Loeb is
correct in this sense, as well.  Other experts in nonstandard analysis
point out a complementary mathematical point that is also valid,
namely the following.

Nelson demonstrated that infinitesimals can be found within the
ordinary real line itself in the following sense.  Infinitesimals are
found in the real line by means of enriching the language through the
introduction of a unary predicate \st{} and postulating an axiom
schema (of Idealization), one of most immediate instances of which
implies the existence of infinitely large integers and hence nonzero
infinitesimals, as in his 1977 article; see Katz--Kutateladze
\cite{15c} for a related discussion.  Recall that Nelson's framework
\emph{Internal Set Theory} (IST) is a conservative extension of ZFC.\,
In other words, the entire package goes over, including each article
published in the \emph{Annals of Mathematics} so long as it makes no
use of Sarah's switch (see Section~\ref{wings}) \st.

To put it more colorfully, infinitesimals were there all along, but
cousin Georg hasn't noticed them.

\subsection{Multiverse}
\label{s62}

Hamkins \cite{Ha09}, \cite{Ha11} proposes a view of the foundations of
mathematics where there are many distinct concepts of set, each
instantiated in the corresponding set-theoretic universe These works
formulate a number of principles that the multiverse should satisfy.
An interesting observation for our purposes is that the following
principle is compatible with Hamkins's multiverse.
\begin{quote}
\emph{Principle B:} For every \text{ZFC} universe~$V$ in the
multiverse there is a class~$S\subseteq V$ such that all the axioms of
the theory \text{BST} hold in~$(V, \in, S)$.
\end{quote}

In other words, given any universe~$V$ and a class~$S$ as above, the
predicate~$\st=\st_{V,S}^{\phantom{I}}$ defined by
\[
\st(x) \text{ if and only if } x\in S
\]
distinguishes some elements of~$V$ as standard, in a way that makes
BST hold, without adding any new elements to~$V$.  Thus Nelson's claim
that the infinitesimals have been there all along without being
noticed (see Section~\ref{s72}) is literally true in this framework,
though in the context of BST rather than Nelson's original theory IST.
For the benefit of readers familiar with model theory and BST, we give
some technical underpinnings of these claims.  Victoria Gitman and
Hamkins construct a ``toy'' model of the multiverse axioms in ZFC;
see~\cite{GH}.  We show that Principle\;B holds in this model.

\begin{theorem}
The Gitman--Hamkins model satisfies the \emph{Principle\;B}.
\end{theorem}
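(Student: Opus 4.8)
The plan is to reduce \emph{Principle~B} to a single application of \emph{resplendency} for countable recursively saturated models. First I would isolate the feature of the Gitman--Hamkins construction \cite{GH} that makes the argument run: every universe $V$ occurring in their multiverse is a \emph{countable, recursively (equivalently, computably) saturated} model of ZFC. Since \emph{Principle~B} quantifies over the individual universes~$V$, it suffices to establish the purely model-theoretic statement that \emph{for every countable recursively saturated $V\models\mathrm{ZFC}$ there is a class $S\subseteq V$ with $(V,\in,S)\models\mathrm{BST}$.} The decisive point, which matches Nelson's slogan that the infinitesimals ``were there all along'' (Section~\ref{s72}), is that $S$ is to be carved out of $V$ itself, adding \emph{no} new elements; this is exactly what resplendency delivers.

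Next I would set up the resplendency argument. By the Barwise--Schlipf theorem a countable model is resplendent precisely when it is recursively saturated, so $V$ is resplendent. Because BST is \emph{finitely} axiomatizable over ZFC (Kanovei--Reeken \cite[Section 3.2]{KR}), the assertion ``$(V,\in,S)\models\mathrm{BST}$'' is expressed by a single sentence $\beta$ in the $(\in,\st)$-language, with the new unary predicate $\st$ interpreted by the sought class $S$ (a merely recursive axiomatization would serve equally well, since resplendency applies to recursive theories). Resplendency then guarantees an expansion $(V,\in,S)\models\beta$ \emph{provided} $\beta$ is consistent with the elementary diagram of $V$; that is, provided some $(\in,\st)$-structure modelling $\beta$ has its $\in$-reduct an elementary extension of $V$. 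Thus the entire theorem turns on exhibiting one such witness.

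The witness is supplied by the embedding theorem recorded in Section~\ref{s52}: unlike IST or NST, \emph{every} model of ZFC embeds as the class of all standard sets into some model of BST. Applying this to $V$ yields a BST model $(W,\st^{W})$ whose standard class $S_W$ is an isomorphic copy of $V$. By \emph{Transfer}, which is exactly the Tarski--Vaught criterion for $\in$-formulas with standard parameters, one has $S_W\prec W$, so the composite $V\cong S_W\prec W$ exhibits $W$ as an $\in$-elementary extension of $V$. Naming each element of $V$ by a constant and interpreting it by its image in $W$, the structure $(W,\st^{W})$ satisfies the elementary diagram of $V$ together with $\beta$; hence $\{\beta\}$ is consistent with that diagram, and resplendency produces an $S\subseteq V$ with $(V,\in,S)\models\mathrm{BST}$. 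Since all sets of a BST model are internal, this says precisely that $V$ is the internal universe and $S$ the standard class of a BST structure built on $V$ alone, which is the content of \emph{Principle~B} for this universe.

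I expect the step requiring the most care to be the consistency hypothesis of resplendency: one must ensure that the BST model witnessing consistency can genuinely be taken to have (a copy of) $V$ as an $\in$-elementary submodel, i.e.\ that the embedding theorem applies with $V$ itself as the standard class, and that no parameters beyond the empty tuple intrude, which is what keeps $\beta$ parameter-free and the resplendency instance clean. An alternative, more hands-on route would bypass resplendency by directly representing $V$ as a bounded ultrapower of an elementary submodel $S\prec V$ generated by a point in the monad of a standard ultrafilter, along the lines of the ultrafilter characterization of BST (Kanovei--Reeken \cite[Section 6.1]{KR}); there the four axioms are read off as \emph{Transfer} from $S\prec V$, \emph{Boundedness} from the bounded range condition, \emph{Bounded Idealization} from the monad/saturation condition, and \emph{Standardization}---the most delicate---from the closure of $S$ under standard operations. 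I would nonetheless favour the resplendency argument, as it is shorter and makes the ``no new elements'' phenomenon manifest.
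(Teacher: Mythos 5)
Your argument is correct, but it takes a genuinely different route from the paper's. The paper's proof does not invoke resplendency: starting from the standard core interpretation of BST definable in ZFC (Kanovei--Reeken \cite[Theorems~4.1.10(i), 4.3.13, Corollary~4.3.14]{KR}), it produces from a given universe $(M,\in_M)$ of the multiverse an \emph{external} BST model $(N,\in_N,S_N)$ whose standard class is isomorphic to $M$, checks that $(N,\in_N)$ is again countable and computably saturated with the same theory and the same standard system as $(M,\in_M)$, concludes $(N,\in_N)\cong(M,\in_M)$ by Key Lemma~6 of \cite{GH}, and transports $S_N$ along that isomorphism to the desired $S\subseteq M$. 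You instead expand $V$ \emph{in place}: Barwise--Schlipf resplendency together with the finite axiomatizability of BST over ZFC reduces everything to the consistency of a single sentence $\beta$ with $\mathrm{Th}(V)$, which you witness by the same embedding theorem plus Transfer (giving $V\cong S_W\prec W$). The two arguments are cousins---Key Lemma~6 and resplendency are both back-and-forth consequences of countable computable saturation---but your decomposition trades the paper's verification that the interpreted model $N$ is computably saturated with the correct standard system for the finite axiomatizability of BST over ZFC (which the paper records anyway) and the Barwise--Schlipf theorem. Your route makes the ``no new elements'' phenomenon manifest and applies verbatim to any countable computably saturated model of ZFC; the paper's route stays entirely within the toolkit already assembled in \cite{GH} and \cite{KR} and extends mechanically to GRIST. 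One small point of hygiene: resplendency as usually stated only requires consistency of the new sentence with $\mathrm{Th}(V,\bar a)$ for the finitely many parameters occurring in it, so since your $\beta$ is parameter-free, consistency with $\mathrm{Th}(V)$ already suffices; your elementary-diagram witness is stronger than needed but certainly adequate.
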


\begin{proof}[Outline of proof]
One can start with the fact that a standard core interpretation for
BST is definable in ZFC (Kanovei--Reeken \cite[Theorems~4.1.10(i) and
4.3.13]{KR}), where the expression ``standard core'' indicates that
the universe~$V$ of ZFC is isomorphic to the universe of standard sets
in the interpretation.

An immediate corollary \cite[Corollary 4.3.14]{KR} is that for every
model
$(M, \in_M)$ of ZFC there is a corresponding model~$(N,\in_N,S_N)$ of
BST, in which the class of standard sets~$S_N$ is isomorphic to~$M$.

The multiverse of Gitman and Hamkins consists of all countable
computably saturated models of ZFC.  It is easy to see that if
$(M,\in_M)$ is countable and computably saturated, then also the
corresponding 
$(N, \in_N, S_N)$ is countable computably saturated.
Furthermore,~$(N, \in_N)$ and~$(M, \in_M)$ have the same theory and
the same standard system.  Hence they are isomorphic
\cite[Key\;Lemma\;6]{GH}.  If~$S$ is the image of~$S_N$ under such an
isomorphism, then~$(N, \in_N, S_N)$ and~$(M, \in_M, S)$ are also
isomorphic.  In particular,~$(M,\in_M,S)$ satisfies BST.
\end{proof}

It is not known whether BST can be replaced by Nelson's IST in this
argument; see the related discussion in Section~\ref{s52}.
Essentially the same argument works for the theory GRIST (see
Section~\ref{s57}) starting with the fact that a standard core
interpretation for GRIST is definable in ZFC, Hrbacek \cite{Hr12}.

\begin{theorem}
One can consistently assume that every universe of \emph{ZFC} in the
multiverse can be stratified into many levels of relative standardness
by a binary relation~$\sqsubseteq$ in such a way that all the axioms
of \emph{GRIST} hold in~$(V,\in,\sqsubseteq)$.
\end{theorem}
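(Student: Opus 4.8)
The plan is to establish the statement in the Gitman--Hamkins model, exactly as was done for Principle~B and BST in the preceding theorem, replacing BST throughout by GRIST and the unary standardness predicate~\st{} by the binary relative standardness relation~$\sqsubseteq$. The one external input I would invoke is the fact, due to Hrbacek~\cite{Hr12}, that a \emph{standard core interpretation} for GRIST is definable in ZFC. Concretely, this furnishes $\in$-formulas that, interpreted in any model~$(M,\in_M)$ of ZFC, define a structure~$(N,\in_N,\sqsubseteq_N)$ satisfying GRIST, in such a way that the coarsest level of standardness (the sets standard in the sense of~$\sqsubseteq_N$ relative to~$0$) forms a copy isomorphic to~$(M,\in_M)$. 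This is the exact analog, for GRIST, of \cite[Corollary~4.3.14]{KR}, which supplied the corresponding model of BST.

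The main body of the argument then proceeds as before. First I would fix a universe~$(M,\in_M)$ of the multiverse, i.e., a countable computably (recursively) saturated model of ZFC, and form the associated GRIST model~$(N,\in_N,\sqsubseteq_N)$ given by the interpretation. Since the interpretation is definable and~$M$ is countable, $N$ is again countable; and since recursive saturation is preserved under definable interpretations, $(N,\in_N,\sqsubseteq_N)$, and in particular its $\in$-reduct~$(N,\in_N)$, is again computably saturated. Next I would verify the two hypotheses of \cite[Key\;Lemma\;6]{GH}: that~$(N,\in_N)$ and~$(M,\in_M)$ have the same complete $\in$-theory and the same standard system. The agreement of theories follows from the \emph{Transfer} principle of GRIST at the coarsest level, which equates the truth of an $\in$-sentence in the internal universe~$N$ with its truth in the coarsest-standard universe, the latter being isomorphic to~$M$. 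The agreement of standard systems follows from the interpretation being definable inside~$M$ (so~$N$ codes no new subsets of~$\omega$) together with the embedding of~$M$ as the standard core (so none are lost). Key Lemma~6 then yields an isomorphism~$(N,\in_N)\cong(M,\in_M)$.

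Finally, transporting~$\sqsubseteq_N$ along this isomorphism produces a binary relation~$\sqsubseteq$ on~$M$ with~$(M,\in_M,\sqsubseteq)\cong(N,\in_N,\sqsubseteq_N)$, whence~$(M,\in_M,\sqsubseteq)$ satisfies GRIST \emph{without} adding any elements to~$M$. As this can be carried out for every universe in the Gitman--Hamkins multiverse, the GRIST analog of Principle~B holds there, which witnesses the consistency asserted by the theorem.

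I expect the main obstacle to be the verification that the GRIST interpretation, substantially more elaborate than the BST one since it must encode hyperfinitely many levels of standardness and the relation~$\sqsubseteq$ rather than a single predicate, is genuinely \emph{definable} in the uniform sense required, so that both the preservation of recursive saturation and the invariance of the standard system go through. Once the interpretation is known to be definable in ZFC (the content of~\cite{Hr12}), these two transfers are routine; but confirming that the apparatus of GRIST does not introduce noncomputable data that would spoil recursive saturation, or that the coarsest-level Transfer suffices to pin down the full $\in$-theory, is the point that would demand care.
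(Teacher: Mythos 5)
Your proposal is correct and takes essentially the same route as the paper, which disposes of the GRIST case by remarking that ``essentially the same argument works'' as for BST, starting from the definability in ZFC of a standard core interpretation for GRIST due to Hrbacek \cite{Hr12}; you have simply written out that argument (preservation of countability and computable saturation, agreement of theory and standard system, appeal to Key Lemma 6 of \cite{GH}, and transport of~$\sqsubseteq$) in full. The obstacle you flag is precisely the external input the paper delegates to \cite{Hr12}, so nothing further is needed.
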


\subsection{Switches}
\label{s63}

Hamkins \cite{Ha12}, \cite{Ha15} discusses the concept of
\emph{switches}, exemplified by the continuum hypothesis (CH).  Here
the terminology of switch implies that it can be turned on or off at
will.  Similarly, using modern techniques of forcing, one can pass
from a model of ZFC that satisfies CH to one that satisfies~$\neg$\,CH
and vice versa.

The availability of the predicate \st{} in the language is also a type
of switch.  Namely, all models of ZFC can be built up into models of
BST, turning the switch on as it were.  Meanwhile, the forgetful
functor (just forget about the switch in the glove compartment) takes
you from BST back to ZFC.

The analogy with switches is that one model has infinitesimals, while
the other doesn't.  This is analogous to one model possessing a set of
intermediate cardinality between~$\N$ and~$\N^{\N}$, and the other
not.  Both switches arguably challenge the II hypothesis; see
Section~\ref{s11}.

Now \st{} is a different type of switch, but whatever the situation
may be for the ontology and epistemology of switches, Sarah's switch
implies (not merely that one can fly faster than drive but) that the
discovery of the switch leaves all theorems in place, so that Nelson's
real numbers can be viewed as the familiar ones, and the
infinitesimals have been there all along without being noticed--even
though the familiar model from the last third of the 19th century
apparently didn't have them.

\subsection{Historical antecedents}
\label{s85}

The view that numbers come in more than one flavor is closely parallel
to the dichotomy of \emph{assignable} vs \emph{inassignable}
quantities (the latter being viewed as \emph{useful fictions}) in
Leibnizian calculus; see Bair et al.\;\cite{13a} and Bascelli et
al.\;\cite{16a} for more details.  Even earlier, Fermat's technique
of \emph{adequality} exploited procedures using~$E$ in a striking
anticipation of later infinitesimal techniques; see \cite{13e}.
Meanwhile L.\;Carnot spoke of \emph{quantit\'es d\'esign\'ees} and
\emph{quantit\'es auxiliaires} in 1797; see Barreau \cite[p.\;46,
53]{Ba89}.

Stolz, du Bois-Reymond, and others were working on infinitesimals at
the end of the 19th century, and had they joined forces with Frege or
Peano to conceive an axiomatisation that would actually incorporate
Leibniz's assignable \emph{vs} inassignable distinction, 20th century
mathematics may have looked different.  This viewpoint involves being
able to conceive of the history of mathematics as something other than
inevitable march toward Weierstrassian epsilontics and
Cantor--Dedekind reals.  In more precise philosophical terms, it
involves envisioning the history of mathematics in terms of the
\emph{Latin model} rather than the \emph{butterfly model}, to borrow
the terminology from Ian Hacking \cite{Ha14}.

To comment on Hacking's distinction between the \emph{butterfly model}
and the \emph{Latin model}, we note the contrast between a model of a
deterministic biological development of animals like butterflies, as
opposed to a model of a contingent historical evolution of languages
like Latin.

Hacking's dichotomy applies to the development of the field of
mathematics as a whole.  Some scholars view the development of
mathematics as a type of organic process predetermined genetically
from the start, even though the evolution of the field may undergo
apparently sudden and dramatic changes, like the development of a
butterfly which passes via a cocoon stage which is entirely unlike
what it is pre-destined to produce.

The Latin model acknowledges contingent factors in the development of
an exact science (mathematics included), and envisions the possibility
of other paths of development that may have been followed.  For
example, had an axiomatic formalisation of infinitesimals been
proposed earlier, it might have been incorporated into the early
formalisations of set theory, and spared us the inanity of the
Cantor--Russell anti-infinitesimal vitriol, reflecting the state of
affairs in mathematical foundations during the second half of the 19th
century; for additional details see \cite{13f}.

Hacking's perspective is at odds with some of the received history of
mathematical analysis.  A related point is made by P.\;Mancosu in the
following terms:
\begin{quote}
the literature on infinity is replete with such `Whig' history.
Praise and blame are passed depending on whether or not an author
might have anticipated Cantor and naturally this leads to a completely
anachronistic reading of many of the medieval and later contributions.
Mancosu \cite[p.\;626]{Ma09}
\end{quote}
In his critique of intuitionism, Bernays introduced a distinction
related to \emph{assignable/inassignable} in terms of
\emph{accessible} vs \emph{inaccessible}:

\begin{quote}
Brouwer appeals to intuition, but one can doubt that the evidence for
it really is intuitive. Isn't this rather an application of the
general method of analogy, consisting in extending to inaccessible
numbers the relations which we can concretely verify for accessible
numbers? As a matter of fact, the reason for applying this analogy is
strengthened by the fact that there is no precise boundary between the
numbers which are accessible and those which are not.  Bernays
\cite{Be35} (translation by Charles Parsons)
\end{quote}
Thus, we obtain infinitesimals as soon as we assume that (1) there are
assignable/standard real numbers, that obey the same rules as all the
real numbers, and~(2)~there are real numbers that are not assignable.
Of course, neither Leibniz nor Carnot employed any set-theoretic
notions to specify an \emph{ontology} of their infinitesimals, but
their \emph{procedures} find better proxies in modern infinitesimal
frameworks than in modern Weierstrassian ones.%
\footnote{The procedures \emph{vs} ontology distinction is dealt with
in greater detail in the articles Borovik--Katz \cite{12b}, Bair et
al.\;\cite{17a}, Bascelli et al.\;\cite{16a}, B\l aszczyk et
al.\;\cite{17c}, B\l aszczyk et al.\;\cite{17d}.}
In more technical terms, one considers the ordinary ZFC formulated in
first order logic, and adds to it the unary predicate \st{} and the
axiom schemata.

Robinson's framework has enabled a reappraisal of the procedures of
the pioneers of infinitesimal analysis.  For a broad outline of such a
program see the studies \cite{13a} (2013), \cite{13b} (2013),
\cite{14a} (2014), \cite{17d} (2017).  Specific scholars studied
include
\begin{itemize}
\item
Stevin in \cite{12c} (2012);
\item
Gregory
in \cite{17b}; 
\item
Fermat in \cite{13e} (2013);
\item
Leibniz in \cite{12e} (2012), \cite{13f} (2013), \cite{14c} (2014),
\cite{16a} (2016), \cite{17c}\;(2017);
\item
Euler in \cite{15b} (2015), \cite{17a}\;(2017);
\item 
Cauchy in (Laugwitz \cite{La87}) and in \cite{11b} (2011), \cite{12b}
(2012).
\end{itemize}

\section
{Reeb, naive integers, and \emph{Claim\,Q}}
\label{s7}

Georges Reeb's position regarding the use of infinitesimals was
developed in his essay entitled \emph{La math\'ematique non standard
vieille de soixante ans?}  There are two distinct versions of the
essay, \cite{Re79} (1979) and \cite{Re81} (1981).  The 1979 version of
the essay was reprinted in Salanskis \cite{Sa99}.  The book gives an
account of the \emph{constructivist view} of Reeb.  Reeb was closely
associated with J.\;Harthong in his philosophy of mathematics.  They
published the article \emph{Intuitionnisme\;84} \cite{HR} in the book
Barreau--Harthong \cite{BH89}.  One can distinguish at least three
distinct approaches to the problem of motivating a non-naive integer
in Reeb, as detailed in the next three sections.  See also Lobry
\cite{Lo89}, Diener--Diener \cite[p.\;4]{DD}.

\subsection{Nonformalizable intuitions}
\label{s71}

Reeb's position concerning \emph{naive integers} can perhaps be
described as follows.  The \emph{naive integers} are those integers
that all members of humanity share before they understand any advanced
mathematics.  Such \emph{naive integers} are already present in any
formal language since language is necessarily a \emph{succession} of
symbols, rather than an unordered collection of symbols.

If one accepts a formal language (e.g., that of ZF) with respect to
which~$\mathbb{N}$ is defined, one must accept also that one
understands the following \emph{informal} reasoning:
\begin{itemize}
\item We recognize ``$1$'' of~$\mathbb{N}$ as the \emph{naive}
\textbf{one};
\item We recognize ``$2$'' of~$\mathbb{N}$ as the \emph{naive}
\textbf{two};
\item etc.
\item If we recognize ``$n$'' of~$\mathbb{N}$ as the \emph{naive}
\textbf{n} then we recognize ``$n+1$'' of~$\mathbb{N}$ as the naive
\textbf{n\;plus\;one}.
\end{itemize}
Reeb then argues that an assertion to the effect that every element
of~$\N$ is naive is not supported by any formal mathematics, and
arrives at his
\begin{quote}
``Claim Q'': The naive integers don't fill up~$\mathbb{N}$.%
\footnote{\label{f37}Reeb's term in the original French was
\emph{Constat\;Q} which we loosely translate as \emph{Claim\;Q} fully
aware of the inadequacies of such a translation.  The difficulty of
the term was analyzed by Salanskis, who noted: ``Le probl\`eme du
remplissement de~$\N$ par les naifs est totalement d\'enu\'e de sens
si l'on ne joue pas le jeu de r\^ever que les formalismes suscitent
des r\'ef\'erents.''  Salanskis \cite{Sa94} (translation: ``The
problem of filling~$\N$ by the naive integers is totally meaningless
if one is not playing the game of dreaming that formalisms generate
referents.'')}
\end{quote} 
Thus claim Q is at tension with the CD+II mindset (see
Section~\ref{s11}).  

To anyone familiar with model theory, Reeb's ``claim Q" could easily
be interpreted in terms of the existence of nonstandard models of the
natural numbers, whether in PA or ZF, first constructed in
Skolem\;\cite{Sk33}.  However, Reeb \cite{Re79} takes a more
``fundamental" attitude and seems to argue for his ``claim Q" somehow
\emph{from first principles}, the naive integers being taken to be
available before a commitment to formal mathematics.  Reeb comments
that being \emph{naive} is not a mathematical concept, hence not
formalizable; hence they cannot be said to fill up~$\N$.

\subsection{Ideal intruders}
 
In \cite{Re81}, Reeb takes a different tack, and argues that any
infinitary construction in mathematics, as recognized at least since
Hilbert \cite{Hi26}, necessarily introduces \emph{ideal} elements not
intended by initial naive intuitions:
\begin{quote}
D. Hilbert - pour nous en tenir \`a un seul nom - a montr\'e
clairement, dans son c\'el\`ebre article sur l'infini, comment la
formalisation (par exemple~$\N , \R, \ldots$) de notions concr\`etes
(les entiers de tout le monde, les points du continu intuitifs)
introduit n\'ecessairement, en quelque sorte contre la vigilance du
formalisateur d'abondants objets id\'eaux, non d\'esir\'es.  Reeb
\cite[p.\;149]{Re81}
\end{quote}
Reeb goes on to refer to such elements as ``des intrus id\'eaux
in\'evitables lors de la formalisation.''

\subsection
{Link between intuitionism and nonstandard analysis}

In the book Diener--Reeb \cite[Chapter~9]{DR}, Reeb provides yet
another account of a non-naive integer in an intuitionistic setting,
in terms of the size of a hypothetical solution to~$x^n+y^n=z^n$ (this
was before A.\,Wiles; of course Fermat's last theorem can be replaced
by a conjecture that is still open).

Surprising as it may seem to the uninitiated, the French school of
nonstandard analysis draws a direct connection between Brouwer's
intuitionism and nonstandard analysis.  We refer to Diener--Reeb
\cite[Chapter 9]{DR} and Harthong--Reeb \cite{HR} for a detailed
discussion, while we sketch the motivation for this connection as
follows.

L.\;E.\;J.\;Brouwer was the founder of \emph{intuitionism}, the first
school of constructive mathematics; the latter aims to provide a
\emph{computational} foundation for mathematics based on the
Brouwer--Heyting--Kolmogorov (BHK) interpretation of logic.

To identify a part of classical mathematics as not-acceptable in
intuitionistic mathematics, Brouwer introduced a technique which would
later be called ``Brouwerian counterexamples;" see Mandelkern
\cite{Ma89} for an overview.

Such counterexamples come in \emph{strong} and \emph{weak} flavors and
are meant to cast doubt on the constructive/intuitionistic
acceptability of a given theorem or axiom.  A weak Brouwerian
counterexample against the law of excluded middle (LEM) is as follows:

\begin{quote}
$A \vee\neg A$ is not acceptable (under the BHK interpretation)
because there is no algorithm to decide whether $A=$ `Goldbach's
conjecture' is false or not (or any as-yet unproved conjecture).
\end{quote}

However, Goldbach's conjecture has been verified using computers up to
very large numbers.  Thus, there is the possibility, discussed at
length in \cite{HR}, that Goldbach's conjecture is false, but that the
counterexample cannot be constructed (in principle and/or in
practice).  Hence, Goldbach's conjecture would be ``true in the real
world", but false in principle.  Here, the real world could be either
``the physical world" or the world of constructive mathematics,
according to \cite{HR}.

In other words, all the natural numbers one can construct (in
principle or in practice) do satisfy Goldbach's conjecture, but
``there are out there" numbers which do not.  Following \cite{HR}, one
could refer to the former (constructive) numbers as \emph{naive
integers} (satisfying Goldbach's conjecture) and to the others as
\emph{non-naive}.  Now, Goldbach's conjecture is just one example, and
there will always be unsolved conjectures, so the previous idea is
persistent in that sense.  The above reasoning is how one could
interpret the adage by Reeb and Harthong that ``the naive integers do
not fill up $\N$" based on Brouwer's weak counterexamples to LEM.

Meanwhile, the observation that ``the naive integers do not fill
up~$\N$" is the basic motivation for having nonstandard numbers in
\cite{HR}, and we observe how Brouwer's intuitionism motivates the
existence of nonstandard numbers, especially in the sense of Nelson's
IST.

\subsection{Nelson-style motivations}

There are Nelson-style motivations for such a \emph{claim\;Q} but they
don't seem to be the same as Reeb's.  There are various motivations
for why naive integers shouldn't exhaust~$\N$.  One of them is in
terms of a multitude that's too vast to be expressed by even a
computer the size of the universe running the entire time allotted to
our civilisation and exploiting the fastest growing functions in our
logical arsenal including \emph{super}busy beavers.  Such a number
could function as infinite for all practical purposes at the
\emph{naive} level.  This indicates a useful lack of homogeneity
of~$\N$ and a promise of a richer structure which is better captured
in terms of an enriched syntax as in Nelson's system, which singles
out \emph{standard} (or \emph{assignable}) elements out of~$\N$ by
means of a single-place predicate violating the separation axiom.

\subsection{Quantum intuitions}

The breakdown of infinite divisibility at quantum scales poses an
undeniable challenge to the CD+II mentality (see Section~\ref{s11}).
It makes physically irrelevant a literal reading of the mathematical
definition of the derivative in terms of limits as~$\Delta x$
\emph{tends to zero}, since attempting to calculate the derivative for
increments below the Planck scale would yield physically meaningless
results.  Rather, quotients like~$\frac{\Delta y}{\Delta x}$ need to
be taken in a certain range, or at a suitable level.  Attempts to
formalize Planck's~$\hslash$ as an infinitesimal go back at least to
Harthong \cite{Ha84}, Werner--Wolff \cite{WW}.  The article
Nowik--Katz \cite{15d}%
\footnote{\label{f24}In his technical report for the CNRS in 1985,
Alain Connes wrote: ``La s\'eduction de l'analyse non standard est due
en grande partie \`a la cr\'eation d'un vocabulaire suggestif; la
g\'eom\'etrie diff\'erentielle s'est bien gard\'ee de c\'eder \`a
cette tentation, etc.''  (cited in Barreau \cite[p.\;35]{Bar94}).  Had
we been aware of Alain Connes's \emph{seductive} comment at the time
we might have subtitled our article \cite{15d} ``Differential geometry
reduced at last.''}
developed a general framework for differential geometry at
level~$\lambda$.  In the technical implementation~$\lambda$ is an
infinitesimal but the formalism is a better mathematical proxy for a
situation where infinite divisibility fails for physical reasons, and
a scale for calculations needs to be fixed accordingly.

If one accepts Reeb's ``Claim Q,'' then any \emph{non-naive} integer
can be described as \emph{infinite} in the sense that it is greater
than any \emph{naive} integer.  Once one forms the field of fractions,
the inverse of such an \emph{infinite} integer in this sense becomes
an \emph{infinitesimal}.

\section{Conclusion}

While mathematicians may not always think of set theory as being
\emph{dynamic}, such a switch in foundational thinking is helpful in
appreciating the advantages of the framework developed by Robinson,
Nelson, and others.


Just as the automobiles have gotten better over the past 130 years, so
also there is room for improvement as far as the 1870 set-theoretic
foundations are concerned.  The automobile industry is dynamic and
flexible willy-nilly (if you don't innovate, your competitors will)
but the received, and outdated, views on set theory seem to have
cornered the market like an \emph{intended} monopoly, alluded to in
Section~\ref{s23}.  Automobiles today have numerous options, from fuel
injection, 4-wheel drive, convertible, and GPS to more futuristic ones
like lift-off abilities to various degrees, with broad agreement as to
the general utility of such options some of which have become part of
the standard package.  We argue in favor of inclusion of the
infinitesimal option as part of the core package of set-theoretic
foundations.

A natural place to start would be in education, so as to restore
infinitesimals to the calculus curriculum, as in Cauchy's classroom at
the \emph{Ecole Polytechnique}.  Accordingly, over the past few years
we have trained over 400 freshmen using Keisler's infinitesimal
calculus textbook \cite{Ke86}, and summarized the results in the study
Katz--Polev \cite{17h}.  At the high school level infinitesimal
calculus has been taught in Geneva for the past twelve years, based on
the approach developed in \cite{HLO}.

In addition to serving as a fruitful tool for what Leibniz called the
\emph{Ars inveniendi}, Robinson's framework has occasioned a deepened
reflection on mathematical foundations in general and the meaning of
\emph{number} in particular.

\section*{Acknowledgments}

We are grateful to Eric Leichtnam and Dalibor Pra\v z\'ak for helpful
comments on an earlier version of the manuscript.  M.\;Katz was
partially supported by Israel Science Foundation grant no.\;1517/12.
S.\;Sanders was supported by the following funding bodies: FWO
Flanders, the John Templeton Foundation, the Alexander von Humboldt
Foundation, and the Japan Society for the Promotion of Science, and
expresses gratitude towards these institutions.

\end{document}